\newtheorem{theorem}{Theorem}[section]
\newtheorem{proposition}[theorem]{Proposition}
\newtheorem{lemma}[theorem]{Lemma}
\newtheorem{corollary}[theorem]{Corollary}
\newtheorem{definition}[theorem]{Definition}
\newtheorem{example}[theorem]{Example}
\newtheorem{remark}[theorem]{Remark}
\newtheorem{theoremz}{Theorem}
\newcommand{\rmap}{\to}
\newcommand{\X}{\ensuremath{\mathfrak{X}}}
\newcommand{\pr}{pr}
\renewcommand{\hom}{\mathrm{Hom}}
\DeclareMathOperator{\modular}{mod}     
\renewcommand{\mod}{\modular}
\newcommand{\G}{\mathcal{G}}            
\renewcommand{\H}{\mathcal{H}}          
\newcommand{\Lie}{L}          
\renewcommand{\rmap}{\to}
\newcommand{\al}{\alpha}
\newcommand{\rank}{\text{\rm rk}\,}   
\title{Jacobi structures and Spencer operators}
\author{Marius Crainic}
\address{Mathematical institute, Utrecht University, Utrecht, The Netherlands}
\email{M.Crainic@uu.nl}
\author{Mar\'\i a Amelia Salazar}
\address{Max-Planck-Institut f\"ur Mathematik, Bonn, Germany}
\email{mariasalazarp@gmail.com}
\thanks{This research was financially supported by the ERC Starting
Grant no. 279729.}
\date{\today}
\begin{document}
\maketitle

\begin{abstract}
This paper reveals the fundamental relation between Jacobi structures and the classical Spencer operator coming from the theory of PDEs \cite{Spencer,KumperaSpencer}; in particular, we provide a direct and much simpler/geometric approach to the integrability of Jacobi structures. It uses recent results on the integrability of Spencer operators and multliplicative forms on Lie groupoids with non-trivial coefficients \cite{Maria, thesis}. \end{abstract}

\section{Introduction}

In this paper we provide a new approach to the integrability of the Jacobi structures of Lichnerowicz \cite{Lich78} and the local Lie algebras of Kirillov  \cite{Kirillov};
it is based on our remark that Jacobi structures are intimately related to the classical Spencer operator coming from the geometric theory of PDEs \cite{Spencer,KumperaSpencer}, combined with
our recent result on the integrability of Spencer operators and multiplicative forms with coefficients \cite{Maria, thesis}. This approach is not only new, but also much more direct/geometric and remarkably simpler than the known ones (see the long formulas from \cite{Jacobi}). In this introduction we describe the main key-words and literature that come with Jacobi structures, indicating along the way the content of this paper. \\

\noindent\textbf{Lichnerowicz's Jacobi structures:} Jacobi structures were discovered by Lichnerowicz who, after his work on Poisson and symplectic structures, was looking for a similar theory in which the symplectic structures were replaced by their odd-dimensional analogue, i.e. contact structures. He introduced them 
in \cite{Lich78} and then studied them intensively \cite{Lich83,Lichnerowicz,Dazord-Lich-Marle91}, etc. In Lichnerowicz's terminology, a Jacobi structure is a pair 
$(\Lambda, E)$ consisting of a bivector $\Lambda$ and a vector field $R$ on $M$, satisfying certain first order differential equations:
\begin{equation}
\label{first-order} 
[\Lambda, R]= 0, \ [\Lambda, \Lambda]= 2R\wedge \Lambda 
\end{equation}
(see also below). Lichnerowicz also studied a locally conformal version of the theory, in which the pairs $(\Lambda, R)$ are defined only locally
and, on the overlaps, they are related by certain (conformal) transformations \cite{Lich78}; in particular, Lichnerowicz's locally conformal Jacobi structures
come with an underlying line bundle arising from the transition functions on the overlaps. This aspect was further clarified by Marle \cite{Marle} who 
uses the term {\it Jacobi bundles} for the resulting line bundles. \\

\noindent\textbf{Kirillov's local Lie algebras:} Interesting enough, and very much relevant to the present paper, Lichnerowicz's Jacobi structures turned out to be ``essentially the same'' as
the local Lie algebras structures (on line bundles) that were considered by Kirillov already a few years earlier \cite{Kirillov}. More precisely, Kirillov studied Lie algebra structures
\[ \{\cdot, \cdot\}: \Gamma(L)\times \Gamma(L)\rmap \Gamma(L) \]
on the space $\Gamma(L)$ of sections of a line bundle $L$ over a manifold $M$, which are local in the sense that, for $u, v\in \Gamma(L)$ supported in some open $U\subset M$, $\{u, v\}$ is supported in $U$ as well. Only later Lichnerowicz remarked \cite{Lich83} that
\begin{itemize}
\item the previous equations (\ref{first-order}) are equivalent to the condition that the bracket $ \{\cdot, \cdot\}$ defined on the space $C^{\infty}(M)$ of smooth functions on $M$ by:
\begin{equation}\label{Jacobi-br} 
\{f, g\}= \langle df\wedge dg, \Lambda\rangle + fL_R(g)- gL_R(f)  
\end{equation}
satisfies the Jacobi identity (where $\langle \cdot, \cdot\rangle$ denotes the pairing between forms and multi-vectors, and $L_R$ denotes the Lie derivative along $R$).
\item Kirillov's work actually shows that any Lie algebra structure on $C^{\infty}(M)$ which is local must be of this type.
\end{itemize}
In other words, Lichnerowicz's Jacobi structures $(\Lambda, R)$ are the same thing as Kirillov's local Lie algebras with trivial underlying line bundle. Moreover, this extends to
the case of arbitrary line bundles, with the conclusion that Lichnerowicz's locally conformal Jacobi structures are the same thing as Kirillov's local Lie algebras.\\

\noindent\textbf{The line bundle:} We will adopt the following terminology: pairs $(\Lambda, R)$ as above will be called here {\it Jacobi pairs} or a {\it Jacobi structure on the trivial line bundle}, while the term {\it Jacobi structure} will be reserved for 
the resulting locally conformal theory or, equivalently, for local Lie algebra structures on an arbitrary line bundle. Hence Jacobi pairs correspond to Jacobi structures with 
trivial underlying line bundle. Note that this corresponds to the similar terminology from Contact Geometry, where one talks about contact forms and contact structures on a manifold $M$: the later are encoded in contact hyperplanes $\mathcal{H}\subset TM$ and come together with the normal line bundle $L= TM/\mathcal{H}$; contact forms correspond to the
case when $L$ is the trivial line bundle. Of course, this is more than an analogy since, as it is already clear from the original work of Lichnerowicz, we know that 
\begin{itemize}
\item contact forms are in 1-1 correspondence with non-degenerte Jacobi pairs $(\Lambda, R)$ (where non-degeneracy means $TM= \Lambda^{\sharp}(T^*M)\oplus \mathbb{R}\cdot R$, $\Lambda^{\sharp}$ is $\Lambda$ interpreted as a linear map $T^*M\rmap TM$)
\item similarly (and more generally), contact structures are in 1-1 correspondence with non-degenerate Jacobi structures.
\end{itemize}
Since in the contact case one very often makes the (rather mild) assumption that the line bundle is trivial, we would like to emphasize here that one of the 
points of this paper is that, for general Jacobi structures, it is important to allow and work with general line bundles. There are at least three reasons:
\begin{itemize}
\item the resulting arguments are much more geometric and less computational (in particular, we invite the reader to compare this paper with \cite{Jacobi}). 
\item the line bundle carries an extra structure (that of a representation) and, even when the line bundle is trivial as a vector bundle, the extra-structure is almost never trivial (see the comments of Examples \ref{trivial-but-not-1} and \ref{trivial-but-not-2}). 
\item there are interesting contact structures with non-trivial normal line bundle, for instance the manifold of contact elements on an affine space (for which the name {\it contact structure} is due) \cite{Arnold}.
\end{itemize}

\noindent\textbf{Poissonization:}  Similar to the ``symplectization of a contact manifold''\cite{contact}, and generalizing it, one can talk about the Poissonization of Jacobi pairs \cite{Lich78,Dazord-Lich-Marle91}, obtaining a 1-1 correspondence between Jacobi pairs $(\Lambda, E)$ on $M$ and homogeneous Poisson structures on $M\times \mathbb{R}$. This gives rise to the so-called ``Poissonization trick'' for proving results about Jacobi pairs, by moving to the Poisson world. However, we would like to point out that this is very unsatisfying. On one hand, the resulting arguments  are mainly algebraic, lacking in geometric insight. On the other hand, since one works explicitly with Jacobi pairs, the resulting arguments are not only algebraic but also very computational (because of the reasons mentioned above). One of the points we are trying to make in this paper is that, by paying enough attention to the line bundle and its structure (the relationship with the Spencer operator), the arguments become much more direct, geometrical, and conceptual (in particular, free of unnecessary computations). \\

\noindent\textbf{Integrability:} We now return to our historical comments on Jacobi structures. Following 
\begin{itemize}
\item Lichnerowicz's philosophy that the interaction between Jacobi structures and contact structures is analogous to the one between Poisson and symplectic structures,
\item the fact that the global objects underlying (or better: integrating) Poisson structures are the symplectic groupoids (i.e. Lie groupoids endowed with a
symplectic form ``compatible'' with the groupoid multiplication) 
\end{itemize}
it was expected that there were a notion of ``contact groupoid'' that integrates Jacobi structures. This problem was first solved in the case when the underlying bundle was trivial  \cite{Kerbrat,Libermann93}. 
The outcome seemed, at least at first sight, very un-natural and certainly unaesthetic  (see \cite{Kerbrat}). The reason is the same as above: while this case is apparently (!!!) simpler, the structure involved
is not visible, as the line bundle, although trivial as a vector bundle, is not trivial as a representation, giving rise to a certain mysterious cocycle.

The general case was considered by Dazord in \cite{Dazord1} and it turned out to be much more elegant (geometric and less computational). It is interesting to point out that Dazord's motivation for looking at the integrability of Jacobi structures was very much related to 
Kirillov's point of view: they provide an intermediate step in the process of integrating the local Lie algebra to a Lie group; indeed, with the contact groupoid at hand, there is a natural notion of Legendrian bisections of the groupoid and they form the desired Lie group. \\

\noindent\textbf{Jacobi structures and contact groupoids:}
We now have a closer look at the process of integrating a Jacobi structure $(L, \{\cdot, \cdot\})$ on a manifold $M$ to a contact groupoid $(\Sigma, \mathcal{H})$ (where $\Sigma$ denotes the Lie groupoid and $\mathcal{H}$ the contact hyperplane). The resulting story is
completely similar to that from Poisson Geometry: 
\begin{itemize}
\item for any contact groupoid $(\Sigma, \mathcal{H})$ over $M$ there is an induced Jacobi structure $(L, \{\cdot, \cdot\})$ on $M$. Moreover, the infinitesimal counterpart of $\Sigma$, i.e. its Lie algebroid, depends only on the Jacobi structure: it is the first jet bundle $J^1L$ of $L$ with the Lie algebroid bracket expressed in terms of the bracket $ \{\cdot, \cdot\}$ of $L$. 
\item conversely, starting with a Jacobi structure $(L, \{\cdot, \cdot\})$, one can talk about the associated Lie algebroid $J^1L$ \cite{Dazord}. Hence, to build $(\Sigma, \mathcal{H})$,  one first has to assume that the algebroid $J^1L$ is integrable. 
\item the integrability problem for the given Jacobi structure becomes: if $J^1L$ is integrable by a Lie groupoid $\Sigma$, is there a contact structure $\mathcal{H}$ on $\Sigma$, making $(\Sigma, \mathcal{H})$ into a contact groupoid for which the induced Jacobi structure on the base is the original one? 
\end{itemize}

\noindent The proofs of such results are spread over the literature. The most difficult part (the question above) was treated in  \cite{Jacobi}. However, most of the arguments (in particular the entire  \cite{Jacobi}) are based on the ``Poissonization trick''; they are based on long computations and lack geometric insight. This paper provides the direct approach.\\

\noindent\textbf{Spencer operators:} We now move to the second part of our title. The classical Spencer operator associated to a vector bundle $E$ over a manifold $M$ \cite{Spencer} is the operator
\[ D: \Gamma(J^1E)\rmap \Omega^1(M, E)\]
 which controls the sections of $J^1E$ which are holonomic, i.e. of type $j^1(s)$ for some $s\in \Gamma(E)$: they are those sections that are zeroes of $D$. 
The Spencer operator that is relevant to this paper is simply the one associated to $E= L$ -the line bundle underlying a Jacobi structure. Of course, it is not just the operator $D$ that is important, but also the structure that
it comes (and interacts) with- structure that reflects the fact that we deal with a Jacobi structure and not just with a line bundle.

This brings us to the notion of Spencer operators. These make sense as soon as we fix an algebroid $A$ and a representation $E$ of $A$; they are operators 
\[ D: \Gamma(A)\rmap \Omega^1(M, E)\]
with the same properties as the classical Spencer operator and are compatible with the Lie brackets involved. If the Lie algebroid $A$ comes from a Lie groupoid $\Sigma$, such Spencer operators are the infinitesimal counterpart of 
1-forms on $\Sigma$ with coefficients in $E$, which are compatible with the multiplication (they are {\it multiplicative}); one of the main results of \cite{Maria, thesis} proves an integrability theorem in this context.

In summary, the main steps for the integration of a Jacobi structure $(L,\{\cdot,\cdot\})$ are: consider the Lie algebroid $J^1L$, note that the classical Spencer operator $D$ is compatible with the brackets, consider the multiplicative one form integrating $D$ (on the groupoid $\Sigma$ integrating $J^1L$) and take its kernel. \\

\noindent\textbf{The content of this paper:} In Section \ref{Contact manifolds and their brackets} we review some of the basic notions on contact structures, including the (probably not so well-known) associated Jacobi bracket. Section \ref{jacobi} is devoted to Jacobi structures and the associated Lie algebroids. Section 4 indicates the relevance of Spencer operators in the theory of Jacobi structures and discusses its global counterpart (multiplicative forms and distributions). Section \ref{Contact groupoids} recalls and discusses contact groupoids. Section \ref{From contact groupoids to Jacobi manifolds} uses Spencer operators to show that the base of a contact groupoid carries an induced Jacobi structure (Theorem \ref{theorem-1}). Finally, Section \ref{sec:jactoctc} takes the reverse problem of integrating a Jacobi structure to a contact groupoid (Theorem \ref{theorem-2}).

\section{Contact manifolds and their brackets}
\label{Contact manifolds and their brackets}

This section recalls some basic notions on contact manifolds.

A {\bf contact structure} (or hyperplane) on a manifold $M$ is a hyperplane distribution $\H\subset TM$ which is maximally non-integrable, i.e. it has the property that the {\it curvature} \begin{equation}\label{eq:curvature}c_{\H}:\H\times \H\to L\end{equation} is non-degenerate. Here, $L$ is the quotient line bundle
\[ L:= TM/\H\]
and $c_{\H}$ is given at the level of sections by $c_{\H}(X, Y)= [X, Y]$ mod $\H$. 

\begin{definition} A Reeb vector field of the contact manifold $(M, \H)$ is any vector field $R$ on $M$ such that
\[ [R, \Gamma(\H) ]\subset \Gamma(\H).\]
We denote by $\X_{\mathrm{Reeb}}(M, \H)$ the set of Reeb vector fields.
\end{definition}
 
The notion of Reeb vector field also appears in the literature under the name of {\it contact vector field} (e.g. \cite{Audin}).

\begin{lemma}\label{lemma-1} $\X_{\mathrm{Reeb}}(M, \H)$ is a Lie subalgebra of the Lie algebra $\X(M)$ of all vector fields on $M$ and
\[ \X(M)= \X_{\mathrm{Reeb}}(M, \H)\oplus \Gamma(\H).\]
\end{lemma}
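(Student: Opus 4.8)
The plan is to prove the two assertions separately. That $\X_{\mathrm{Reeb}}(M,\H)$ is a linear subspace is immediate, since the defining condition $[R,\Gamma(\H)]\subset\Gamma(\H)$ is linear in $R$; to show it is closed under the bracket I would take $R_1,R_2\in\X_{\mathrm{Reeb}}(M,\H)$ and $X\in\Gamma(\H)$ and invoke the Jacobi identity
\[ [[R_1,R_2],X] = [R_1,[R_2,X]] - [R_2,[R_1,X]] . \]
Since $[R_i,X]\in\Gamma(\H)$ by hypothesis, each term on the right is again in $\Gamma(\H)$, hence so is the left-hand side; thus $[R_1,R_2]\in\X_{\mathrm{Reeb}}(M,\H)$ and $\X_{\mathrm{Reeb}}(M,\H)$ is a Lie subalgebra.

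For the decomposition, the key point I would use is that, for any $V\in\X(M)$, the map
\[ \phi_V\colon \Gamma(\H)\rmap\Gamma(L),\qquad \phi_V(Y):=[V,Y]\mod\H , \]
is $C^{\infty}(M)$-linear, because $\phi_V(fY)=\big(f[V,Y]+V(f)Y\big)\mod\H=f\,\phi_V(Y)$ for $Y\in\Gamma(\H)$; hence $\phi_V$ is a section of $\H^{*}\otimes L$. On the other hand, the non-degeneracy of $c_{\H}$ says exactly that the bundle map $\H\rmap\H^{*}\otimes L$, $X\mapsto c_{\H}(X,\cdot\,)$, is fibrewise injective, and since $L$ has rank one the bundles $\H$ and $\H^{*}\otimes L$ have the same rank, so this map is a vector bundle isomorphism and induces an isomorphism on sections. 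Thus there is a unique $X\in\Gamma(\H)$ with $c_{\H}(X,\cdot\,)=\phi_V$, and then $R:=V-X$ satisfies $[R,Y]\mod\H=\phi_V(Y)-c_{\H}(X,Y)=0$ for all $Y\in\Gamma(\H)$, i.e. $R\in\X_{\mathrm{Reeb}}(M,\H)$. This gives $\X(M)=\X_{\mathrm{Reeb}}(M,\H)+\Gamma(\H)$. Finally, the sum is direct: if $R\in\X_{\mathrm{Reeb}}(M,\H)\cap\Gamma(\H)$ then $c_{\H}(R,Y)=[R,Y]\mod\H=0$ for every $Y\in\Gamma(\H)$, so $R=0$ by non-degeneracy.

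I do not expect a real obstacle here. The two things that need a little care are the tensoriality of $\phi_V$ — needed so that it descends to an honest bundle map that can be matched against $c_{\H}$ — and the passage from the pointwise non-degeneracy of $c_{\H}$ to the global invertibility of the associated morphism $\H\to\H^{*}\otimes L$; once these are in place, both statements are formal.
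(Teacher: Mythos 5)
Your proposal is correct and follows essentially the same route as the paper: the Jacobi identity for the subalgebra claim, and, for the decomposition, the $C^{\infty}(M)$-linearity of $Y\mapsto [V,Y]\ \mathrm{mod}\ \H$ together with the non-degeneracy of $c_{\H}$ to produce the $\Gamma(\H)$-component, with directness again coming from non-degeneracy. The only difference is that you spell out the details (tensoriality, the rank count making $\H\to\H^{*}\otimes L$ an isomorphism) that the paper leaves implicit.
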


\begin{proof} The first statement follows from the Jacobi identity for the standard Lie bracket of vector fields. For the second part, if $ \X(M)= \X_{\mathrm{Reeb}}(M, \H) + \Gamma(\H)$, then non-degeneracy of $c_{\H}$ implies that the sum is direct. Let now $X\in \X(M)$ be arbitrary. Consider the map
\[ [X, -]: \H\rmap L, \ \ W\mapsto [X, W] \ \textrm{mod}\ \H \]
(a priori, the above formula is defined only on sections, but it is easily seen to be $C^{\infty}(M)$-linear). Non-degeneracy of $c_{\H}$ implies that there exists $V\in \H$ such that this map coincides with $c_{\H}(V, -)$. This implies that $R:= X- V\in \X_{\mathrm{Reeb}}(M, \H)$ hence $X\in \X_{\mathrm{Reeb}}(M, \H)+ \Gamma(\H)$.
\end{proof}

It is also useful to use the dual point of view on contact structures, i.e. to view $\H$ as the kernel of a 1-form with coefficients in $L$; this can be realized tautologically, by reinterpreting the canonical projection from $TM$ to $L$ as a 1-form
\[ \theta\in \Omega^1(M, L).\]
Note that the curvature $c_{\H}$ can be written as $c_{\H}(X, Y)= \theta([X, Y])$. We say that $\theta$ is of {\bf contact type}. The case in which $L$ is the trivial line bundle gives rise to the standard notion of contact forms \cite{contact}. The previous lemma gives immediately:

\begin{corollary}\label{corollary-1}
$\theta$ restricts to a vector space isomorphism
\begin{equation}\label{Reeb-iso} 
\theta|_{\X_{\mathrm{Reeb}}(M, \H) }: \X_{\mathrm{Reeb}}(M, \H) \stackrel{\sim}{\rmap} \Gamma(L) .
\end{equation}
\end{corollary}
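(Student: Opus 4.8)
The plan is to deduce the statement directly from Lemma \ref{lemma-1} together with the tautological description of $\theta$ as the canonical surjection $TM \to L$ with kernel $\H$. Since $\theta|_{\X_{\mathrm{Reeb}}(M, \H)}$ is visibly $\mathbb{R}$-linear, only bijectivity needs to be checked.

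For surjectivity, I would first observe that $\theta \colon TM \to L$ is a fibrewise surjective morphism of vector bundles, hence admits a smooth splitting; consequently the induced map on sections $\X(M) = \Gamma(TM) \rmap \Gamma(L)$ is surjective. Now, given an arbitrary section of $L$, write it as $\theta(X)$ for some $X \in \X(M)$ and decompose $X = R + W$ with $R \in \X_{\mathrm{Reeb}}(M, \H)$ and $W \in \Gamma(\H)$ using Lemma \ref{lemma-1}. Since $\theta$ vanishes on $\Gamma(\H)$ by its very definition, $\theta(X) = \theta(R)$; thus every section of $L$ is of the form $\theta(R)$ for some Reeb vector field $R$.

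For injectivity, suppose $R \in \X_{\mathrm{Reeb}}(M, \H)$ satisfies $\theta(R) = 0$. Because $\H = \ker \theta$, this forces $R \in \Gamma(\H)$, so $R$ lies in $\X_{\mathrm{Reeb}}(M, \H) \cap \Gamma(\H)$, which is $\{0\}$ since the sum in Lemma \ref{lemma-1} is direct. Hence $R = 0$, and $\theta|_{\X_{\mathrm{Reeb}}(M, \H)}$ is a linear isomorphism.

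I do not expect any real obstacle: the corollary is a formal consequence of the direct sum decomposition of Lemma \ref{lemma-1} and the fact that $\theta$ is, by construction, the quotient projection onto $L$. The only point deserving a word is the passage from fibrewise surjectivity of the bundle map $\theta$ to surjectivity on global sections, which is the standard fact that a surjection of vector bundles over $M$ splits.
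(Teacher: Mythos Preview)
Your proof is correct and is exactly the unpacking the paper has in mind: the paper offers no separate argument for the corollary beyond the remark that Lemma \ref{lemma-1} ``gives immediately'' the result, and your surjectivity/injectivity checks are precisely how one reads off the isomorphism from the direct sum $\X(M)= \X_{\mathrm{Reeb}}(M, \H)\oplus \Gamma(\H)$ together with $\ker\theta=\Gamma(\H)$.
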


This allows us to transfer the Lie algebra structure of $\X_{\mathrm{Reeb}}(M, \H)$ to a Lie algebra structure on $\Gamma(L)$, denoted by $\{\cdot, \cdot\}$. 

\begin{definition} 
The bracket $\{\cdot, \cdot\}$ on $\Gamma(L)$ is called the {\bf Reeb bracket} associated to the contact manifold $(M, \H)$. 
\end{definition}

Next we rewrite Lemma \ref{lemma-1} in a more convenient form:

\begin{lemma}\label{lemma-2} The map 
\[ \X(M)\cong  \Gamma(L)\oplus \Gamma(\hom(\H, L)), \ \ \ X\mapsto (\theta(X), \theta([\cdot, X])),\]
is an isomorphism of vector spaces; the induced $C^{\infty}(M)$-module structure on the right hand side is given by
\[ f\cdot (u, \phi)= (f u, \phi+ df\otimes u).\]
\end{lemma}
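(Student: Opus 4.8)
The plan is to deduce the statement directly from Lemma \ref{lemma-1} (the Reeb splitting $\X(M)=\X_{\mathrm{Reeb}}(M,\H)\oplus\Gamma(\H)$), from Corollary \ref{corollary-1}, and from the non-degeneracy of the curvature $c_{\H}$. First I would check that the map is well defined, i.e. that for fixed $X\in\X(M)$ the assignment $\Gamma(\H)\ni W\mapsto \theta([W,X])$ is $C^\infty(M)$-linear in $W$ and hence comes from a genuine bundle map $\phi\colon\H\rmap L$: using $[fW,X]=f[W,X]-(Xf)W$ together with $W\in\Gamma(\H)=\Ker\theta$ gives $\theta([fW,X])=f\,\theta([W,X])$, as required. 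So the map $X\mapsto(\theta(X),\theta([\cdot,X]))$ indeed lands in $\Gamma(L)\oplus\Gamma(\hom(\H,L))$, and it is obviously $\Rr$-linear.

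For bijectivity I would use the splitting from Lemma \ref{lemma-1}: write $X=R+V$ with $R\in\X_{\mathrm{Reeb}}(M,\H)$ and $V\in\Gamma(\H)$. Then $\theta(X)=\theta(R)$ since $V\in\Ker\theta$, and for $W\in\Gamma(\H)$ one has $\theta([W,R])=0$ (because $R$ is Reeb, so $[W,R]=-[R,W]\in\Gamma(\H)=\Ker\theta$), hence $\theta([W,X])=\theta([W,V])=c_{\H}(W,V)$. Thus, under the identification $\X(M)=\X_{\mathrm{Reeb}}(M,\H)\oplus\Gamma(\H)$, the map of the lemma becomes $(R,V)\mapsto(\theta(R),\,c_{\H}(\cdot,V))$. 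Its first component is the isomorphism $\theta|_{\X_{\mathrm{Reeb}}(M,\H)}\colon\X_{\mathrm{Reeb}}(M,\H)\xrightarrow{\sim}\Gamma(L)$ of Corollary \ref{corollary-1}; its second component is induced by the bundle map $\H\rmap\hom(\H,L)$, $v\mapsto c_{\H}(\cdot,v)$, which is fiberwise injective by non-degeneracy of $c_{\H}$ and is therefore an isomorphism since $\H$ and $\hom(\H,L)$ have the same (finite) rank. As a direct sum of two isomorphisms, the map of the lemma is a linear isomorphism.

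Finally, for the $C^\infty(M)$-module structure I would simply transport multiplication by $f\in C^\infty(M)$ through the isomorphism. One has $\theta(fX)=f\,\theta(X)$, which gives the first component $fu$; and for $W\in\Gamma(\H)$, using $[W,fX]=f[W,X]+(Wf)X$,
\[ \theta([W,fX])=f\,\theta([W,X])+(Wf)\,\theta(X)=\phi(W)+(df\otimes u)(W),\]
where $\phi=\theta([\cdot,X])$, $u=\theta(X)$, and $df\otimes u\in\Gamma(\hom(\H,L))$ is the bundle map $W\mapsto (Wf)\,u$ (the restriction of $df$ to $\H$ tensored with $u$). This is exactly the stated formula $f\cdot(u,\phi)=(fu,\phi+df\otimes u)$.

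I do not expect a genuine obstacle: the content is entirely contained in Lemma \ref{lemma-1}, Corollary \ref{corollary-1}, and the non-degeneracy of $c_{\H}$. The only points requiring a little care are the well-definedness check ($C^\infty(M)$-linearity of $W\mapsto\theta([W,X])$) and the observation that $\theta\circ[\,\cdot\,,R]$ vanishes on $\Gamma(\H)$ when $R$ is Reeb — both immediate from the Leibniz rule and the definition of a Reeb vector field — together with keeping the conventions for $[\cdot,X]$ and $df\otimes u$ consistent with the stated module formula.
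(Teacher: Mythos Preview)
Your approach is exactly the paper's: invoke Lemma~\ref{lemma-1} and Corollary~\ref{corollary-1} for the first summand, the non-degeneracy of $c_{\H}$ for the second, and then read off the module structure by computing $\theta([\cdot,fX])$ via the Leibniz rule. Your version is in fact more complete, since you spell out the $C^\infty(M)$-linearity of $W\mapsto\theta([W,X])$ and the vanishing of $\theta([\cdot,R])$ on $\Gamma(\H)$ for $R$ Reeb, which the paper leaves implicit.

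One small slip: in your last display you pass from $f\,\theta([W,X])+(Wf)\,\theta(X)$ to $\phi(W)+(df\otimes u)(W)$, dropping the factor $f$ in front of $\phi(W)$. The correct result is $f\phi(W)+(df\otimes u)(W)$, i.e.\ $f\cdot(u,\phi)=(fu,\,f\phi+df\otimes u)$; this is what the paper's own proof obtains, and the missing $f$ in the displayed statement of the lemma is a typo (otherwise $0\cdot(u,\phi)=(0,\phi)$ would be nonzero). So your computation is right up to the penultimate step; just keep the $f$.
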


\begin{proof} Lemma \ref{lemma-1} combined with the isomorphism (\ref{Reeb-iso}) and the one between $\H$ and $\hom(\H, L)$ induced by $c_{\H}$ ($V\mapsto c_{\H}(\cdot, V)= \theta([\cdot, V])$) yields the claimed isomorphism of vector spaces. As for the induced $C^{\infty}(M)$-module structure, if 
$(u, \phi)= (\theta(X), \theta([\cdot, X]))$, then 
\[ (\theta(fX), \theta([\cdot, fX]))= (f\theta(X), f\theta([\cdot, X])- df(\cdot)\theta(X))= (fu, f\phi+ df\otimes u),\]
thus completing the proof.
\end{proof}

\begin{remark}\label{Spencer-dec}\rm \  For any vector bundle $E$ over $M$, the bundle of first jets of sections of $E$, denoted by $J^1E$, fits into a short exact sequence of vector bundles over $M$:
\[ 0\rmap \hom(TM, E)\stackrel{i}{\rmap} J^1E \stackrel{pr}{\rmap} E\rmap 0,\]
where $pr$ is the canonical projection and $i$ is determined by 
\[ i(df\otimes u)= f j^1(u)- j^1(fu).\]
Passing to sections, the resulting sequence has a canonical splitting ($u\mapsto j^1(u)$); hence one obtains a decomposition 
\begin{equation}\label{eq:Spencer_decomposition} \Gamma(J^1E)\cong \Gamma( \textrm{Hom}(TM, E))\oplus \Gamma(E) ,\end{equation}
which is henceforth referred to as the {\bf Spencer decomposition}. Note that the induced $C^{\infty}(M)$-module structure on the right hand side is given by precisely the same formula as in Lemma \ref{lemma-2}. The striking similarity between the two is clarified in the statement of Theorem \ref{theorem-1} of section \ref{From contact groupoids to Jacobi manifolds}.
\end{remark}

The notions introduced thus far allow to construct further important geometric objects associated to contact structures. Firstly, surjectivity of (\ref{Reeb-iso}) means that for any section $u\in\Gamma(L)$, there exists a unique vector field $R_u\in\X(M)$ with the property that 
\begin{eqnarray*}
\theta(R_u)=u\qquad\text{and}\qquad \theta([R_u, X])= 0\ \ \textrm{for\ all} \ X\in \Gamma(\H).
\end{eqnarray*}
For $u\in \Gamma(L)$, $R_u$ is called \textbf{the Reeb vector field associated to $u$}. The characterizing property for the Reeb bracket $\{\cdot, \cdot\}$ is
\[  [R_u, R_v]= R_{\{u, v\}}\ \ \ \ \ \ \textrm{for\ all}\ u, v \in \Gamma(L).\]
Applying $\theta$, one obtains the explicit formula
\[ \{u, v\}= \theta([R_u, R_v])\]
relating the Reeb bracket with the 1-form $\theta$ and the Reeb vector fields. Lemma \ref{lemma-2} implies that, for $f\in C^{\infty}(M)$, $u\in \Gamma(L)$,
\[ R_{fu}= fR_{u}+ b(df\otimes u),\]
where 
$b:\hom(\H,L)\to \H$
is the isomorphism induced by $c_{\H}$ (sending $c_{\H}(V, -)\in \hom(\H,L)$ to $V\in \H$). Note that the inverse of the isomorphism defined in Lemma \ref{lemma-2} sends $(u, \phi)$ to $R_u- b(\phi)$. 


\begin{example}\label{example-1}\rm \  When $L$ is the trivial bundle the Reeb vector field associated to the constant function $1$ is the standard Reeb vector field $R$ associated to the contact form 
$\theta$ \cite{contact}; it is uniquely determined by 
\[ \theta(R)= 1, \ i_{R}(d\theta)= 0.\]
The other Reeb vector fields correspond to arbitrary $f\in C^{\infty}(M)$:
\[ R_f= fR+ b(df).\]
Note that, in this case, $b: \H^*\to \H$ is the isomorphism induced by $d\theta$. The Reeb bracket becomes a bracket on $C^{\infty}(M)$. To write down the formula more explicitly, 
one uses $b$ to reinterpret $d\theta|_{\H}$ as an element in $\Lambda^2\H\subset \Lambda^2TM$, i.e. 
as a bivector $\Lambda\in \X^2(M)$. The bracket becomes:
\begin{eqnarray}\label{eq:jacobi-bracket} \{f, g\}= \Lambda(df, dg)+ R(f)g- fR(g).\end{eqnarray}
\end{example}

\section{Jacobi structures and the associated Lie algebroids}
\label{jacobi}

In this section we recall the notion of Jacobi structure, we discuss the associated Lie algebroid and then we conclude with 
the natural representation of the Lie algebroid on the line bundle (to be exploited in the later sections).

As mentioned in the introduction, there are various ways to look at Jacobi structures. We follow here Kirillov \cite{Kirillov}
(who uses the term local Lie algebra) and Marle \cite{Marle} (who uses the term Jacobi bundle). For the equivalence with  
Lichnerowicz's locally conformal Jacobi structures \cite{Lich78} we refer to \cite{Lichnerowicz, Dazord-Lich-Marle91}.

\begin{definition}  A \textbf{Jacobi structure} on a manifold $M$ is a pair $(L, \{\cdot, \cdot \})$ consisting of a line bundle $L \to M$ and a Lie
bracket $\{\cdot, \cdot \}$ on the space of sections $\Gamma(L)$, with the property that it is local in the sense that
\[ \textrm{supp}(\{u, v\})\subset \textrm{supp}(u)\cap \textrm{supp}(v)\ \ \ \ \forall\ u, v\in \Gamma(L).\]
\end{definition}

\begin{example}\label{example-2}\rm When $L$ is the trivial bundle Kirillov proved in \cite{Kirillov} that the Jacobi bracket is 
determined by a pair $(\Lambda, E)$ consisting of a bivector $\Lambda\in\X^2(M)$ and a vector field $R\in\X(M)$, satisfying 
\[[\Lambda,\Lambda]=2R\wedge\Lambda,\ \ [\Lambda,R]=0.\]
Any such pair induces the bracket given by (\ref{eq:jacobi-bracket}) on $\Gamma(L)= C^{\infty}(M)$ (and conversely). 
Such a pair $(\Lambda, R)$ will be called a {\bf Jacobi pair}; they correspond to the Jacobi structures of Lichnerowicz \cite{Lich78}. 
\end{example}

\begin{example} \rm \ 
The previous section shows that any contact structure has an underlying Jacobi structure. Actually, as in the case of symplectic and Poisson structures, contact structures can be seen as ``non-degenerate Jacobi structures''.
\end{example}

Next, we introduce the Lie algebroid associated to a Jacobi structure which was first defined in \cite{Dazord}.  

\begin{proposition}\label{prop:liealgjac} For any Jacobi structure $(L, \{\cdot, \cdot \})$:
\begin{enumerate}
\item\label{item1} There is a unique vector bundle morphism $\rho: J^1L\rmap TM$
such that, for all $u, v\in \Gamma(L)$, $f\in C^{\infty}(M)$,
\[ \{u, fv\}= f\{u, v\}+ L_{\rho(j^1u)}(f) v. \]
\item\label{item2} There is a unique Lie algebroid structure on $J^1L$ with anchor $\rho$ and whose Lie bracket 
$[\cdot, \cdot]$ on $\Gamma(J^1L)$ satisfies 
\begin{eqnarray}\label{eq-jet-lie-bracket}
[j^1u,j^1v]=j^1\{u,v\},\qquad \forall u,v\in\Gamma(L)
\end{eqnarray}
\end{enumerate}
\end{proposition}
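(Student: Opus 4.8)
The plan is to exploit the Spencer decomposition of $\Gamma(J^1L)$ from Remark \ref{Spencer-dec} and the fact that a Lie algebroid structure on $J^1L$ is determined by its restriction to the canonical generators $j^1u$, $u \in \Gamma(L)$. For part (\ref{item1}), I would first check that the stated formula is forced: if such a $\rho$ exists, then $\rho(j^1 u)$ is the derivation $f \mapsto L_{\rho(j^1u)}(f)$ read off from the failure of $\{u, -\}$ to be $C^\infty(M)$-linear in the second slot. Locality of the bracket is exactly what guarantees that $\{u, -\}$ is a first-order differential operator, so that $f v \mapsto \{u, fv\} - f\{u,v\}$ is $C^\infty(M)$-linear in $f$ for fixed $u$; hence it defines a vector field, call it $X_u$. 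The assignment $u \mapsto X_u$ is $\Rr$-linear, and one computes that $X_{gu}$ differs from $g X_u$ by the derivation $f \mapsto (\text{symbol term}) df$, which depends only on $j^1u$ — more precisely, the difference is controlled by $dg \otimes u$ in the Spencer sense. This is precisely the statement that $u \mapsto X_u$ factors through a bundle map $J^1L \to TM$, using the universal property $i(df \otimes u) = f j^1(u) - j^1(fu)$; so $\rho$ is well-defined and unique.

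For part (\ref{item2}), uniqueness is immediate: formula (\ref{eq-jet-lie-bracket}) together with the Leibniz rule for a Lie algebroid bracket (with the anchor $\rho$ already pinned down) determines $[\cdot,\cdot]$ on all of $\Gamma(J^1L)$, since every section is, locally, a $C^\infty(M)$-combination of sections of the form $j^1u$, and
\[
[\xi, f\eta] = f[\xi, \eta] + L_{\rho(\xi)}(f)\, \eta.
\]
Writing a general section via the Spencer decomposition as $j^1u + i(\alpha)$ and expanding forces a unique formula. For existence, I would define the bracket by this forced formula and then verify the Lie algebroid axioms: antisymmetry, the Leibniz rule, and the Jacobi identity, together with the compatibility $\rho([\xi,\eta]) = [\rho(\xi), \rho(\eta)]$. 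Antisymmetry and the Leibniz rule reduce, after a short computation, to the identity in part (\ref{item1}) and its antisymmetrization. The Jacobi identity for $[\cdot,\cdot]$ on the generators $j^1u$ follows directly from the Jacobi identity for $\{\cdot,\cdot\}$ on $\Gamma(L)$ via (\ref{eq-jet-lie-bracket}); one then has to check it propagates to arbitrary sections, which is automatic once Leibniz and the generator case hold (the Jacobiator is a tensorial expression vanishing on generators).

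The main obstacle I expect is not the generator-level computations, which are formal consequences of the Jacobi identity for $\{\cdot,\cdot\}$, but rather the \emph{well-definedness} checks: confirming that the formula obtained by forcing Leibniz out of (\ref{eq-jet-lie-bracket}) is independent of the chosen local presentation of a section of $J^1L$ as a combination of $j^1u$'s and $i(df\otimes u)$'s. Equivalently, one must check that all the relations among such generators (generated by $i(df \otimes u) = fj^1(u) - j^1(fu)$ and $C^\infty(M)$-bilinearity) are respected by the proposed bracket. This is where the precise match between the $C^\infty(M)$-module structure on the right-hand side of the Spencer decomposition (\ref{eq:Spencer_decomposition}) and the bracket relations does the work; it is a routine but slightly delicate bookkeeping exercise, and it is the place where locality of $\{\cdot,\cdot\}$ is genuinely used (to ensure $\{u,-\}$ has order one, so that no higher symbols appear and $\rho$ really lands in $TM$ rather than in some higher jet bundle). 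Once consistency is established, the remaining axioms are inherited from $(\Gamma(L), \{\cdot,\cdot\})$ with no further difficulty.
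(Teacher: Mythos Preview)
Your approach is essentially the paper's: both use the Spencer decomposition, define $\rho$ via the symbol of $\{u,\cdot\}$ (invoking Kirillov's result that a local Lie bracket on $\Gamma(L)$ is a first-order differential operator in each slot), and pin down the bracket on $J^1L$ by uniqueness on the holonomic generators $j^1u$ together with Leibniz.

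One caution on the Jacobi step. You write that the Jacobiator ``is a tensorial expression vanishing on generators'' and that this is ``automatic once Leibniz and the generator case hold''. That is not quite right: for a skew bracket satisfying the Leibniz rule with anchor $\rho$, one computes
\[
J(\alpha,\beta,f\gamma) = f\,J(\alpha,\beta,\gamma) + \bigl(L_{\rho([\alpha,\beta])}(f) - L_{[\rho(\alpha),\rho(\beta)]}(f)\bigr)\,\gamma,
\]
so the Jacobiator is $C^\infty(M)$-linear \emph{only after} one knows that $\rho$ is bracket-preserving. You do list $\rho([\xi,\eta])=[\rho(\xi),\rho(\eta)]$ among the things to verify, but you do not say how, and your phrasing suggests tensoriality comes for free from Leibniz alone. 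The paper handles this first: it observes that $\rho([\alpha,\beta]) - [\rho(\alpha),\rho(\beta)]$ is $C^\infty(M)$-linear in each argument (a direct check using Leibniz), reduces to holonomic sections, and then uses that $\rho(j^1u)$ is the symbol of $P_u := \{u,\cdot\}$ together with $P_{\{u,v\}} = [P_u,P_v]$ (which is the Jacobi identity for $\{\cdot,\cdot\}$) to conclude. With that in hand, your tensoriality-then-generators argument for the Jacobiator goes through.
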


Recall that part \ref{item2} means that $[\cdot, \cdot]$ makes $\Gamma(J^1L)$ into a Lie algebra and that it satisfies the Leibniz identity \[ [\alpha, f\beta]= f [\alpha, \beta]+ L_{\rho(\alpha)}(f) \beta,\ \ \ \alpha, \beta\in \Gamma(J^1L), f\in C^{\infty}(M).\]

\begin{proof} For part \ref{item1}, the conditions in the statement can be rewritten using the Spencer decomposition \eqref{eq:Spencer_decomposition} for $\Gamma(J^1L)$. Giving a bundle map $\rho: J^1L\rmap TM$ is equivalent to giving a pair of maps
\[ \rho^1: \Gamma(L)\rmap \X(M), \ \rho^2: \hom(TM, L)\rmap TM,\]
where $\rho^2$ is a vector bundle map, $\rho^1$ is linear and they are related by 
\begin{eqnarray}\label{eq-defining-property}
\rho^1(fu)=f\rho^1(u)-\rho^2(df\otimes u).
\end{eqnarray}
Note that $\rho^1= \rho\circ j^1$; hence the condition in the statement yields the following for $\rho^1$:
\begin{eqnarray}\label{eq-defining-property-rho^1}
\{u,fv\}=f\{u,v\}+L_{\rho^1(u)}(f)v.
\end{eqnarray}
Equations (\ref{eq-defining-property}) and  (\ref{eq-defining-property-rho^1}) can be used to define uniquely $\rho^1$ and $\rho^2$ (hence also $\rho$) as follows. The idea is to use a result of Kirillov \cite{Kirillov} which says that $\{\cdot,\cdot\}$ must be a differential operator of order at most one in each argument. Recall that a differential operator of order at most one $P:\Gamma(E)\to\Gamma(F)$, between sections of vector bundles, has a symbol 
\[ \sigma_P\in \Gamma(TM\otimes \hom(E,F))\]
uniquely determined by the property:
\begin{eqnarray*}
P(fu)=fP(u)+\sigma_P(df)(u),\ \ \forall u\in\Gamma(E),f\in C^{\infty}(M).
\end{eqnarray*}
When $E=F=L$ is a line bundle, $\hom(L,L)$ is trivial and therefore $\sigma_P\in \X(M)$. The defining equations (\ref{eq-defining-property-rho^1}) and (\ref{eq-defining-property}) can be interpreted as saying that $\rho^1(u)$ is the symbol of the operator $\{u, \cdot\}$ and that $\rho^2$ is minus the symbol of $\rho^1$. Hence their existence follows from Kirillov's result; uniqueness is clear. \\

For part \ref{item2}, first observe that the condition on $[\cdot, \cdot]$, the Leibniz identity and the fact that $\Gamma(J^1L)$ is generated as a $C^{\infty}(M)$-module by elements of type $j^1(u)$, imply the uniqueness of the bracket, and also indicate the actual formula for it. To see that the resulting bracket is well-defined, one can for instance write  $[\cdot, \cdot]$ explicitly using the Spencer decomposition \eqref{eq:Spencer_decomposition}; alternatively, formula (\ref{eq-jet-lie-bracket}) above can be taken as the definition of the bracket. Either way, $[\cdot, \cdot]$ clearly 
satisfies the Leibniz identity. To prove the Jacobi identity, first note that $\rho$ induces a Lie algebra map at the level of sections. Indeed, the expression $\rho([\alpha, \beta])- [\rho(\alpha), \rho(\beta)]$ is easily seen to be $C^{\infty}(M)$-linear on $\alpha, \beta\in \Gamma(J^1L)$; hence it may be assumed that $\alpha= j^1u, \beta= j^1v$ with $u, v\in \Gamma(L)$ case in which the expression becomes
\[ \rho^1({u, v})- [\rho^1(u), \rho^1(v)].\]
Recall that $\rho^1(u)$ was the symbol of $P_u:= \{u, \cdot\}$. Also, the Jacobi identity for $\{\cdot, \cdot\}$ means that $P_{\{u, v\}}$ is the commutator $[P_u, P_v]$; hence, passing to symbols, the previous expression vanishes. In conclusion, $\rho$ is a Lie algebra morphism. Using this and the Leibniz identity, a simple computation shows that the Jacobiator of $[\cdot, \cdot]$ is $C^{\infty}(M)$-linear in all arguments. Hence, again, it suffices to check the Jacobi identity on elements of type $j^1(u)$, which follows from the Jacobi identity for $\{\cdot, \cdot\}$. 
\end{proof}

\begin{example}\label{example-2.2}\rm Continuing example \ref{example-2}, i.e. when $L$ is the trivial line bundle and we deal with a Jacobi pair $(\Lambda, R)$, the Lie algebroid $J^1L$ is isomorphic to $T^*M\oplus \mathbb{R}$; working out the Lie bracket one finds the long formulas of \cite{Kerbrat}. 
\end{example}

Next, we show that $L$ has a natural structure of {\bf representation} of the Lie algebroid $J^1L$, i.e. it comes with a bilinear map
\[ \nabla: \Gamma(J^1L)\times \Gamma(L)\rmap \Gamma(L), \ (\alpha, u)\mapsto \nabla_{\alpha}(u),\]
satisfying the usual connection-type identities (see e.g. \cite{CrainicFernandes:lecture}) + the flatness condition 
\[ \nabla_{[u, v]}= \nabla_{u}\nabla_{v}- \nabla_{v}\nabla_{u}.\]
One thinks of $\nabla$ as ``infinitesimal action of $J^1L$ on $L$''. 

The next lemma is proven by arguments similar to (but simpler than) those of proof of Proposition \ref{prop:liealgjac}, part 2.

\begin{lemma}\label{lemma:representation} There is a unique action $\nabla$ of $J^1L$ on $L$ satisfying
\[ \nabla_{j^1(u)}(v)= \{u, v\},\ \ \forall u,v\in\Gamma(L).\]
\end{lemma}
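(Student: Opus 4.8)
The plan is to mirror, in a lighter form, the proof of Proposition \ref{prop:liealgjac}(\ref{item2}), exploiting that $\Gamma(J^1L)$ is generated as a $C^{\infty}(M)$-module by the holonomic sections $j^1(u)$, $u\in\Gamma(L)$. \emph{Uniqueness} is then immediate: a connection satisfies $\nabla_{f\alpha}(v)=f\nabla_{\alpha}(v)$, so the prescribed values $\nabla_{j^1(u)}(v)=\{u,v\}$ pin down $\nabla$ on all of $\Gamma(J^1L)$. They also dictate its explicit form through the Spencer decomposition \eqref{eq:Spencer_decomposition}: since $i(df\otimes u)=f\,j^1(u)-j^1(fu)$ by Remark \ref{Spencer-dec}, one is forced to set
\[ \nabla_{i(df\otimes u)}(v)=f\{u,v\}-\{fu,v\}=L_{\rho^1(v)}(f)\,u, \]
the second equality by antisymmetry of $\{\cdot,\cdot\}$ together with \eqref{eq-defining-property-rho^1}. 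For \emph{existence} one takes exactly this pair of maps — on the $\Gamma(L)$-summand, $v\mapsto\{u,v\}$; on the $\Gamma(\hom(TM,L))$-summand, the formula above — as the definition of $\nabla$ via \eqref{eq:Spencer_decomposition}. The one point that is not purely formal is that this be well defined, i.e.\ $C^{\infty}(M)$-linear in the $\hom(TM,L)$-slot; this is checked just as the analogous well-definedness of the jet bracket in Proposition \ref{prop:liealgjac}(\ref{item2}), using Kirillov's symbol calculus and \eqref{eq-defining-property}. Granting it, the connection axioms come for free: $C^{\infty}(M)$-linearity in $\alpha$ holds by construction, and the Leibniz rule $\nabla_{\alpha}(fv)=f\nabla_{\alpha}(v)+L_{\rho(\alpha)}(f)\,v$ holds on generators $\alpha=j^1(u)$ because $\rho(j^1(u))=\rho^1(u)$ is, by Proposition \ref{prop:liealgjac}(\ref{item1}), precisely the symbol of the operator $\{u,\cdot\}$, and then extends to all $\alpha$ by $C^{\infty}(M)$-linearity.

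It remains to establish \emph{flatness}, $\nabla_{[\alpha,\beta]}=\nabla_{\alpha}\nabla_{\beta}-\nabla_{\beta}\nabla_{\alpha}$. Put $K(\alpha,\beta)v:=\nabla_{\alpha}\nabla_{\beta}v-\nabla_{\beta}\nabla_{\alpha}v-\nabla_{[\alpha,\beta]}v$. A direct computation shows $K$ is $C^{\infty}(M)$-linear in each of $\alpha$, $\beta$ and $v$: tensoriality in $\alpha,\beta$ uses only the Leibniz identities for the bracket on $\Gamma(J^1L)$ and for $\nabla$, while tensoriality in $v$ follows since the obstruction term equals $\bigl(L_{[\rho(\alpha),\rho(\beta)]}-L_{\rho([\alpha,\beta])}\bigr)(f)\,v$, which vanishes because $\rho$ is a Lie algebra morphism — a fact already proved inside Proposition \ref{prop:liealgjac}(\ref{item2}). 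Hence it suffices to verify $K=0$ on holonomic sections. Taking $\alpha=j^1(u)$, $\beta=j^1(v)$ and using $[j^1(u),j^1(v)]=j^1\{u,v\}$ from \eqref{eq-jet-lie-bracket}, the identity $K(j^1(u),j^1(v))w=0$ becomes
\[ \{u,\{v,w\}\}-\{v,\{u,w\}\}-\{\{u,v\},w\}=0, \]
which is exactly the Jacobi identity for $\{\cdot,\cdot\}$. Combined with the first paragraph, this proves the lemma.

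The main obstacle is therefore the well-definedness of $\nabla$ on the symbol summand $\Gamma(\hom(TM,L))$ of the Spencer decomposition; but this is strictly easier than — and uses the same input as — its counterpart for the jet bracket, and as there it evaporates entirely if one simply declares $\nabla_{j^1(u)}(v)=\{u,v\}$ and extends by $C^{\infty}(M)$-linearity and the Leibniz rule, taking the resulting formula as the definition. Everything else is bookkeeping built on facts already in hand, above all that the anchor $\rho$ intertwines the brackets, which is precisely what makes $K$ tensorial and reduces flatness to the Jacobi identity.
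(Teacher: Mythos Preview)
Your proof is correct and follows precisely the strategy the paper indicates: the paper does not spell out an argument but simply remarks that the lemma ``is proven by arguments similar to (but simpler than) those of [the] proof of Proposition \ref{prop:liealgjac}, part 2,'' and you have faithfully carried this out --- uniqueness from generation by holonomic sections, existence via the Spencer decomposition, the connection identities from the symbol description of $\rho^1$, and flatness by reducing the curvature tensor to holonomic sections where it becomes the Jacobi identity for $\{\cdot,\cdot\}$. If anything, the well-definedness on the $\hom(TM,L)$-summand is even more transparent than you suggest, since the forced formula $\nabla_{df\otimes u}(v)=df(\rho^1(v))\,u$ is visibly the evaluation $\omega\mapsto\omega(\rho^1(v))$ and hence $C^{\infty}(M)$-linear in $\omega$ on the nose.
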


\begin{example}\label{trivial-but-not-1}\rm \ 
When $L$ is trivial (example \ref{example-2.2}) the action is still non-trivial: it actually encodes $R$! Indeed, $\nabla_{j^1f}(1)=-R(f)$. 
\end{example}

\section{Jacobi structures and the associated Spencer operator}

In this section we recall the definition of Spencer operators and indicate their fundamental role in the study of Jacobi structures (Proposition \ref{cor-Jac-Spencer}). Then we move to their global counterpart: multiplicative forms and distributions on 
groupoids \cite{Maria, thesis}. \\

The classical Spencer operator associated to a vector bundle $L$ \cite{Spencer},
\begin{equation}
\label{classical Spencer operator} D: \Gamma(J^1L)\rmap \Omega^1(M, L),\ \ (X,\alpha)\mapsto D_X(\alpha) =D(\alpha)(X), 
\end{equation}
is the canonical projection on the first factor of the Spencer decomposition \eqref{eq:Spencer_decomposition}.

\begin{definition}\label{def:Spencer-operator} Let $A$ be a Lie algebroid over $M$, let $E$ be a representation of $A$ with associated operator denoted by $\nabla$, and let $l:A\to E$ be a surjective vector bundle map. A {\bf Spencer operator (on the Lie algebroid $A$) relative to $l$} is a bilinear operator
\[ D: \X(M)\times \Gamma(A)\rmap \Gamma(E),\ (X, \alpha)\mapsto D_X(\alpha)\]
 which is $C^{\infty}(M)$-linear in $X$, satisfies the Leibniz identity relative to $l$:
\[ D_X(f\alpha)= fD_X(\alpha)+ L_{X}(f) l(\alpha),\]
and the following two compatibility conditions:
\begin{equation}\label{horizontal}D_{\rho(\alpha)}(\alpha')= \nabla_{\alpha'}(l(\alpha))+ l([\alpha, \alpha'])\end{equation}
\begin{equation}\label{vertical}
\qquad D_X[\alpha,\alpha']=\nabla_\alpha(D_X\alpha')-D_{[\rho(\alpha),X]}\alpha'-\nabla_{\alpha'}(D_X\alpha)+D_{[\rho(\alpha'),X]}\alpha,
\end{equation}
for all $\alpha, \alpha'\in \Gamma(A)$, $X\in\mathfrak{X}(M)$.

\end{definition}

It is easy to see that given a Jacobi structure $(L,\{\cdot,\cdot\})$ on $M$, the classical Spencer operator \eqref{classical Spencer operator} becomes a Spencer operator on the Lie algebroid $J^1L$ associated to $(L,\{\cdot,\cdot\})$. Actually this gives a full characterization of Jacobi structures:

\begin{proposition}\label{cor-Jac-Spencer} Given a line bundle $L$ over a manifold $M$, there is a canonical, bijective correspondence between:
\begin{enumerate}
\item Jacobi structures with underlying line bundle $L$,
\item Lie algebroid structures on $J^1L$ with the property that the classical Spencer operator is a Spencer operator on $J^1L$ relative to the canonical projection $ \textrm{pr}: J^1L\rmap L$.  
\end{enumerate}
\end{proposition}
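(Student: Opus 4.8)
The plan is to prove the correspondence in both directions, using the Spencer decomposition \eqref{eq:Spencer_decomposition} as the common bookkeeping device. First I would make precise the direction (1) $\Rightarrow$ (2): given a Jacobi structure $(L,\{\cdot,\cdot\})$, Proposition \ref{prop:liealgjac} furnishes the Lie algebroid structure on $J^1L$ and Lemma \ref{lemma:representation} the representation $\nabla$ on $L$. One then has to check that the classical Spencer operator $D$ of \eqref{classical Spencer operator}, i.e. the projection onto the $\hom(TM,L)$-factor in the Spencer decomposition, satisfies the axioms of Definition \ref{def:Spencer-operator} relative to $l=\mathrm{pr}$. The $C^{\infty}(M)$-linearity in $X$ and the Leibniz identity relative to $l$ are immediate from the description of the $C^{\infty}(M)$-module structure on $\Gamma(\hom(TM,L))\oplus\Gamma(L)$ recalled in Remark \ref{Spencer-dec}: indeed $D_X(fu\oplus\ldots)$ picks up exactly the $df\otimes u$ term, which is $L_X(f)\,l(\cdot)$. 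The content is in the two compatibility conditions \eqref{horizontal} and \eqref{vertical}, and for these I would exploit that both sides are tensorial, so it suffices to verify them on holonomic sections $\alpha=j^1u$, $\alpha'=j^1v$. On such sections $D$ vanishes (holonomic sections are precisely the zeroes of $D$), $l([\,j^1u,j^1v\,])=l(j^1\{u,v\})=\{u,v\}$ by \eqref{eq-jet-lie-bracket}, and $\nabla_{j^1v}(l(j^1u))=\nabla_{j^1v}(u)=\{v,u\}$ by Lemma \ref{lemma:representation}; so \eqref{horizontal} reduces to $0=\{v,u\}+\{u,v\}$ after accounting for signs in the bracket — i.e. it is just antisymmetry of $\{\cdot,\cdot\}$ (one has to be a little careful about whether \eqref{horizontal} is stated with $\alpha,\alpha'$ or the opposite order, but the bilinear-extension argument handles the general case once the holonomic case is settled). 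Similarly \eqref{vertical} on holonomic inputs reads $D_X(j^1\{u,v\})$ on the left, which is again $0$ since $j^1\{u,v\}$ is holonomic, while every term on the right is $0$ because each contains a factor $D_X(j^1(\cdot))=0$; so \eqref{vertical} holds trivially on holonomic sections and hence, by tensoriality, everywhere.

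Next I would treat the converse (2) $\Rightarrow$ (1). Suppose we are given a Lie algebroid structure on $J^1L$ — call its bracket $[\cdot,\cdot]$ and anchor $\rho$ — for which the classical $D$ is a Spencer operator relative to $\mathrm{pr}$. I define a bracket on $\Gamma(L)$ by $\{u,v\}:=\mathrm{pr}([\,j^1u,j^1v\,])$, equivalently by using the representation $\nabla$ that the Lie algebroid structure induces on $L$ via \eqref{horizontal} — note \eqref{horizontal} with $\alpha'=j^1u$ and $D_{\rho(\alpha)}(j^1u)=0$ forces $\nabla_{j^1v}(l(\alpha))=l([\alpha,j^1v])$, which pins down a canonical candidate for $\nabla$, and one should record that any Lie algebroid representation on a line bundle compatible with the bracket in this way is unique. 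The key claims to verify are: (a) $\{\cdot,\cdot\}$ is $\Rr$-bilinear and antisymmetric — antisymmetry is inherited from $[\cdot,\cdot]$; (b) $\{\cdot,\cdot\}$ satisfies the Jacobi identity — this follows from the Jacobi identity of $[\cdot,\cdot]$ on $J^1L$ together with the fact that $j^1$ is bracket-compatible, which itself needs to be deduced; (c) $\{\cdot,\cdot\}$ is local, equivalently a bidifferential operator of order $\le 1$ in each slot. For (c) I would use the Leibniz-relative-to-$l$ axiom together with \eqref{horizontal}: expanding $\{u,fv\}=\mathrm{pr}([j^1u,j^1(fv)])$ and using $j^1(fv)=f\,j^1v - i(df\otimes v)$ (the Spencer decomposition) plus the Leibniz rule for the algebroid bracket, $[j^1u,f\,j^1v]=f[j^1u,j^1v]+L_{\rho(j^1u)}(f)\,j^1v$, one reads off that $\{u,fv\}=f\{u,v\}+L_{\rho(j^1u)}(f)\,v$ modulo a correction coming from $i(df\otimes v)$ which is handled by condition \eqref{horizontal}; this exhibits $\{u,\cdot\}$ as first order with symbol $\rho\circ j^1$, and symmetry of the argument (or antisymmetry of the bracket) gives first order in the other slot, hence locality. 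Finally one checks that the Lie algebroid associated to $(L,\{\cdot,\cdot\})$ by Proposition \ref{prop:liealgjac} reproduces the given structure on $J^1L$: by uniqueness in Proposition \ref{prop:liealgjac} it is enough to see that $j^1\{u,v\}=[j^1u,j^1v]$ in the given bracket, and this is exactly where conditions \eqref{horizontal}--\eqref{vertical} are used — they say that the difference $[j^1u,j^1v]-j^1\{u,v\}$, which by construction lies in the kernel of $\mathrm{pr}$, i.e. in $\hom(TM,L)$, must in fact vanish, because $D$ applied to both sides agrees (both are built to be compatible) forcing the $\hom(TM,L)$-component to coincide.

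I expect the main obstacle to be precisely the last point in the converse: showing that the given Lie algebroid bracket on $J^1L$ is \emph{forced} to be holonomic-compatible (i.e. $[j^1u,j^1v]=j^1\{u,v\}$) purely from the Spencer-operator axioms, rather than merely agreeing with it after projecting to $L$. The subtlety is that a priori $[j^1u,j^1v]$ and $j^1\{u,v\}$ differ by a section of $\hom(TM,L)$, and one must show this discrepancy is zero. The natural strategy is to apply $D$ to both and use compatibility condition \eqref{vertical} (the ``vertical'' one, which controls $D$ of a bracket) to show $D_X([j^1u,j^1v])=D_X(j^1\{u,v\})=0$ for all $X$; since $D$ restricted to the subbundle $i(\hom(TM,L))$ is an isomorphism onto $\Omega^1(M,L)$ at the level of sections (this is exactly what the Spencer decomposition says), vanishing of $D$ on the discrepancy forces the discrepancy to vanish. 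Making this argument airtight — especially tracking that \eqref{vertical} really does compute $D_X$ of the given bracket in terms of data that vanish on holonomic inputs — is the technical heart; everything else is tensorial linear algebra organized by the Spencer decomposition and the uniqueness statements already available in Proposition \ref{prop:liealgjac} and Lemma \ref{lemma:representation}.
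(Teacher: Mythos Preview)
Your proposal is correct and follows essentially the same route as the paper. The paper's proof is terser---it treats the direction $(1)\Rightarrow(2)$ as immediate and, for $(2)\Rightarrow(1)$, goes straight to the step you correctly flagged as the technical heart: applying \eqref{vertical} to holonomic inputs gives $D_X([j^1u,j^1v])=0$ for all $X$, so $[j^1u,j^1v]$ is holonomic and hence equals $j^1\{u,v\}$, from which the Jacobi identity and the bijectivity of the correspondence follow at once.
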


\begin{proof} We still have to show how the Lie algebroid structure on $J^1L$ induces the Jacobi bracket $\{\cdot, \cdot\}$ on $\Gamma(L)$. We simply define
\[ \{u, v\}:= \textrm{pr}([j^1u, j^1v]).\]
Clearly this is antisymmetric and local in $u$ and $v$. 
Since $D$ vanishes precisely on holonomic sections of $J^1L$ (i.e. of type $j^1u$), equation  (\ref{vertical}) implies that all the expressions of type $[j^1u, j^1v]$ must be holonomic, hence
\[ [j^1u, j^1v]= j^1(\textrm{pr}([j^1u, j^1v]))= j^1\{u, v\}.\]
The Jacobi identity for $\{\cdot, \cdot\}$ follows from that of $[\cdot, \cdot]$. Moreover, comparing the formulas, we see that the two constructions are inverse to each other. 
\end{proof} 

\vspace*{.2in}

Next we look at the global counterpart of Spencer operators (to be applied in Section \ref{sec:jactoctc} to obtain the contact groupoids integrating Jacobi structures). We briefly recall some terminology on Lie groupoids \cite{CrainicFernandes:lecture,Mackenzie:General}. We fix a Lie groupoid $\Sigma$ over $M$; recall that $\Sigma$ denotes the manifold of arrows and $M$ the manifold of objects, $s, t:\Sigma\to M$ denote the source and the target map, respectively, and $m(g, h)= gh$ the multiplication. The right translation $r_g$ induced by an arrow $g: x\rmap y$ is a diffeomorphism from $s^{-1}(y)$ to $s^{-1}(x)$; by differentiation, it induces:
\begin{equation}\label{inf-right-transl} 
r_g:   T^{s}_{a}\Sigma \rmap T^{s}_{ag}\Sigma ,
\end{equation}
where $T^s\Sigma= \textrm{Ker}(ds)$ stands for the bundle of vectors tangent to the $s$-fibers. Recall that the Lie algebroid $A= A(\Sigma)$ associated to $\Sigma$ is, as a vector bundle over $M$, the restriction of $T^{s}\Sigma$ to $M$, where $M$ sits inside $\Sigma$ as units. Using right translations, any $\alpha\in \Gamma(A)$ induces a right invariant vector field (tangent to the $s$-fibers), $\alpha^{r}\in \X^{\mathrm{inv}}(\Sigma)$:
\begin{equation}\label{alpha-r} 
\alpha^r(g)= r_g(\alpha_{t(g)})). 
\end{equation}
This induces an isomorphism (and then the Lie bracket on $\Gamma(A)$):
\[ \Gamma(A) \stackrel{\sim}{\to} \X^{\mathrm{inv}}(\Sigma), \ \ \alpha\mapsto \alpha^r.\]

To discuss multiplicative structures, we use the Lie groupoid $T\Sigma$ over $TM$ whose structure maps are just the differentials of the structure maps of $\Sigma$.

\begin{definition}\label{def:multiplicative_distributions} A (constant rank, smooth) distribution $\H\subset T\Sigma$ is called {\bf multiplicative} if $\H$ is a Lie subgroupoid of $T\Sigma$ with the same base $TM$.
\end{definition}

For a multiplicative distribution $\H$ one defines its $s$-vertical part:
\[ \H^{s}:= \H\cap T^s\Sigma, \] 
Note that, since $\H$ is multiplicative, $0_g\in \H$ and $r_g(X_a)= (dm)_{a, g}(X_a, 0_g)$ for all $X_a\in T^{s}_{a}\Sigma$,
it follows that $\mathcal{H}^{s}$ is invariant under the right translations (\ref{inf-right-transl}):
\begin{equation}\label{item4}
r_g(\mathcal{H}^{s}_{a})= \mathcal{H}^{s}_{ag}.
\end{equation}
Also, since $ds: \H\to TM$ is surjective, for every $X\in T\Sigma$ one finds $V\in \H$ such that $ds(X)= ds(V)$, therefore $\H$ is transversal to the $s$-fibers:
\begin{equation}\label{item3}
T\Sigma= T^s\Sigma+ \H .
\end{equation}
Similar statements arise using left translations acting on the spaces $T^t\Sigma$ and $\H^{t}$.


One also has a dual point of view on multiplicative distributions, obtained by using forms, when (as in the case of contact structures) one reinterprets the projection modulo $\H$ as a 1-form. However, in this setting, the quotient line bundle $\tilde L:=T\Sigma/\H$ is determined by its restriction to $M$:
\begin{eqnarray}\label{restriction-line-bundle}
L:=\tilde L|_M .
\end{eqnarray} 
Indeed, (\ref{item3}) shows that $\tilde{L}= T^{s}\Sigma/\H^{s}$ and then (\ref{item4}) implies that the right translations induce isomorphisms $r_{g}: \tilde{L}_a\to \tilde{L}_{ag}$ whenever $ag$ is defined, in particular, $r_{g}: L_{t(g)}\to \tilde{L}_g$. In fact:

\begin{lemma}\label{eq:right-translation} $L$ is a representation of $\Sigma$ and the right translations induce an isomorphism of vector bundles over $\Sigma$
\begin{eqnarray*} r: t^*L\stackrel{\sim}{\rmap} \tilde{L},\ \ t^*u\mapsto u^r.\end{eqnarray*}
\end{lemma}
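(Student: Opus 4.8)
The plan is to prove the two assertions together, since they are really the same fact viewed two ways: the right translations on $\tilde L = T\Sigma/\H$ become the groupoid action on $L = \tilde L|_M$, and the map $r$ is nothing but the statement that this action trivializes $\tilde L$ pulled back along $t$. First I would recall from the discussion preceding the lemma that, thanks to transversality \eqref{item3} and invariance \eqref{item4}, the right translation $r_g$ of $\Sigma$ (differentiated and passed to the quotient) induces a well-defined linear isomorphism $r_g : \tilde L_a \to \tilde L_{ag}$ whenever $ag$ is defined; in particular, taking $a$ to be the unit $1_{t(g)}$ and using $\tilde L|_M = L$, we get $r_g : L_{t(g)} \xrightarrow{\sim} \tilde L_g$.

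Next I would define the groupoid action of $\Sigma$ on $L$ by declaring, for an arrow $g : x \to y$, the map $g \cdot (-) : L_x \to L_y$ to be the composite $r_{g^{-1}} : L_x = \tilde L_{1_x} \to \tilde L_{g^{-1}} \cdot$... more cleanly: since $r_g : L_{t(g)} \to \tilde L_g$ and also $r_g : \tilde L_{1_{s(g)}} = L_{s(g)} \to \tilde L_g$ — wait, $r_g$ sends $s$-vertical vectors at $t(g)$-based points; so $r_g : L_{t(g)} \to \tilde L_g$ and $r_{g^{-1}} : L_{t(g^{-1})} = L_{s(g)} \to \tilde L_{g^{-1}}$, and since $r_{g^{-1}} \circ r_g = r_{1} = \mathrm{id}$... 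I need to be careful. The correct normalization is: define $g \cdot : L_{s(g)} \to L_{t(g)}$ as $(r_g|_{L_{t(g)}})^{-1}$ composed appropriately — concretely, $g \cdot w := r_{g^{-1}}(w) \bmod \H$ read in $\tilde L_{g^{-1}}$... I would instead simply \emph{define} the action of $g : x\to y$ on $L$ to be the unique isomorphism $L_x \to L_y$ making the square with $r_g : L_y \to \tilde L_g$ and $r_{g^{-1}}$-type maps commute, or, most economically, declare the action so that the asserted formula $r(t^*u) = u^r$ holds and then check it is an action. The functoriality (i.e. $(gh)\cdot = g\cdot \circ h \cdot$ and $1_x \cdot = \mathrm{id}$) follows from the cocycle identity $r_{gh} = r_h \circ r_g$ for right translations (restricted to the appropriate $s$-fibers), which in turn is the differential of the associativity of the multiplication $m$; this is the one genuine computation, but it is immediate from functoriality of $T(-)$ applied to $\Sigma$ and passing to the $\H$-quotient using multiplicativity of $\H$. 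Smoothness of the action follows since $r$ and $m$ are smooth.

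Having the representation, the map $r : t^*L \to \tilde L$, $t^*u \mapsto u^r$ — where $u^r(g) := r_g(u_{t(g)}) \bmod \H_g \in \tilde L_g$ — is fiberwise exactly the isomorphism $r_g : L_{t(g)} \xrightarrow{\sim} \tilde L_g$ from the first step, hence a fiberwise isomorphism; it is smooth because it is the composition of the differential of $m$, the projection $T\Sigma \to \tilde L$, and the (smooth) section machinery, all restricted suitably. Being a fiberwise-invertible smooth bundle map over $\Sigma$, it is a vector bundle isomorphism. Finally, compatibility of $r$ with the groupoid actions on $t^*L$ and on $\tilde L$ (the latter being right translations, which is precisely what makes $\tilde L$ the "same" representation pulled back by $t$) is again the cocycle identity for $r_g$; this is the precise content of "$L$ is a representation of $\Sigma$" making the identification canonical.

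The main obstacle, and really the only subtle point, is getting the variance/normalization of the action right so that $r$ is simultaneously the \emph{trivialization} of $t^*L$ and \emph{equivariant} — i.e. checking that the induced $C^\infty$-structure and the cocycle $r_{gh} = r_h \circ r_g$ assemble into a left action of $\Sigma$ on the correct fibers ($s$-fiber to $t$-fiber), as opposed to an action of the opposite groupoid. Once the differential-of-associativity identity $r_{gh} = r_h r_g$ is written down on $s$-vertical vectors and pushed to the $\H$-quotient using that $\H$ is a subgroupoid of $T\Sigma$, everything else is formal. I expect the author's proof to simply differentiate $m$, use multiplicativity of $\H$ to pass to the quotient, and observe that the resulting cocycle is exactly the representation condition, with $r$ tautologically the associated trivialization.
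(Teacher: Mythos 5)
The half of the lemma concerning the bundle map $r$ is fine and agrees with the paper: transversality \eqref{item3} gives $\tilde L = T^s\Sigma/\H^s$, invariance \eqref{item4} gives the fiberwise isomorphisms $r_g:L_{t(g)}\stackrel{\sim}{\rmap}\tilde L_g$, and these assemble into the smooth fiberwise-invertible bundle map $t^*u\mapsto u^r$.

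The representation half has a genuine gap. A representation requires, for each arrow $g:x\to y$, a linear isomorphism $L_x\rmap L_y$, i.e.\ a map \emph{from the fiber over the source}. Right translations never produce such a map: $r_g$ only identifies $L_{t(g)}$ with $\tilde L_g$, and $r_{g^{-1}}$ identifies $L_{s(g)}$ with $\tilde L_{g^{-1}}$ --- a different fiber of $\tilde L$, so there is no square to close. Your two fallbacks do not repair this: ``declare the action so that $r(t^*u)=u^r$ holds'' is vacuous, since that formula involves only right translations and is entirely independent of any action on $L$; and the cocycle identity $r_{gh}=r_h\circ r_g$, while true, only relates the various $L_{t(\cdot)}$'s to fibers of $\tilde L$ and again never reaches $L_{s(g)}$. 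The missing ingredient --- and the one the paper uses --- is the \emph{left} translations: since $\H$ is multiplicative it is also transversal to the $t$-fibers and $\H^t$ is left-invariant, so $\tilde L=T^t\Sigma/\H^t$ and left translation induces $l_g:L_{s(g)}\stackrel{\sim}{\rmap}\tilde L_g$. The action is then $g\cdot v:=r_g^{-1}(l_g(v))$, and functoriality follows from the corresponding identities for $l$ and $r$ (differentials of associativity). Without bringing in the left translations (or, equivalently, the differential of the inversion map composed with $r_{g^{-1}}$), the action simply is not constructed, so the first assertion of the lemma remains unproved.
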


\begin{proof} For any arrow $g:x\to y$, right translations induce a map $r_g: L_y\stackrel{\sim}{\rmap} \tilde{L}_g$; similarly, one has that left translations induce an isomorphism $l_g: L_x \stackrel{\sim}{\rmap} \tilde{L}_g$. Combining the two, one obtains that $g$ induces an isomorphism $L_x\rmap L_y,v\mapsto g\cdot v$, which satisfy the usual identity for an action and vary smoothly w.r.t $g$ and $v$.
\end{proof}

Henceforth, the canonical projection modulo $\H$ is interpreted as a 1-form
\begin{equation}\label{eq:one_form} \theta\in \Omega^1(\Sigma, t^*L) .\end{equation}
For forms with values in a representation, one can talk about their multiplicativity:

\begin{definition} Let $\Sigma$ be a Lie groupoid and $E$ a representation of $\Sigma$. An $E$-valued {\bf multiplicaitve} one form is any form 
$\eta\in \Omega^1(\Sigma;t^*E)$ satisfying 
\begin{equation}
(m^{\ast}\eta)_{(g,h)} = \pr_1^{\ast}\eta + g\cdot(\pr_2^{\ast}\eta),
\end{equation}
for all $(g,h)$ in the domain $\Sigma_2$ of the multiplication $m$, where $\pr_1,\pr_2: \Sigma_2 \to \Sigma$ denote the canonical projections. We say that $\eta$ is regular if it is surjective. 
\end{definition}

It is clear that the kernel of any regular multiplicative form is a a multiplicative distribution. Conversely, a rather straightforward computation shows that the form \eqref{eq:one_form} associated to a multiplicative distribution is multiplicative.\\


Returning to the infitesimal picture, the key remark is that any multiplicative form on a groupoid induces a Spencer operator (see Definition \ref{def:Spencer-operator}) on the Lie algebroid of the groupoid: 

\begin{proposition}\label{prop:integrability_multiplicative_distributions} Let $\Sigma$ be a Lie groupoid over $M$ with Lie algebroid $A$. Then any multiplicative distribution $\H\subset T\Sigma$ induces:
\begin{itemize}
 \item the vector bundle $L$ which is the restriction to $M$ of $T^s\Sigma/\H^s$;
 \item the vector bundle morphism 
\[ l: A\rmap L, \ \ \ l(\al)=\al\ \mod\ \H^s ;\]
 \item a Spencer operator $D$ on the Lie algebroid $A$ relative to $l$:
\begin{eqnarray}\label{Spencer-operator}
D_X(\al)=[\tilde X,\al^r]|_M\ \mod \ \H^s.
\end{eqnarray}
Here, for $X\in\X(M)$, $\tilde X\in\Gamma(H)$ is any extension of $X$ to $\Sigma$ (where $M\overset{u}{\hookrightarrow}\Sigma$ as units) with the property that $d_gs(\tilde X_g)=X_{s(g)}$, and $\alpha^r$ is given by \eqref{alpha-r} .
\end{itemize}
\end{proposition}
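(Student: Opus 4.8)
The plan is to verify, one by one, the three properties that make $D$ (as defined by \eqref{Spencer-operator}) a Spencer operator relative to $l$. First, however, I would pause on the one genuine subtlety: well-definedness of $D_X(\alpha)$. The formula uses an extension $\tilde X \in \Gamma(\H)$ of $X \in \X(M)$ with $d_g s(\tilde X_g) = X_{s(g)}$; such an extension exists because \eqref{item3} gives $T\Sigma = T^s\Sigma + \H$ and $ds|_\H$ is surjective, so one may choose $\tilde X$ inside $\H$ lifting $X$ through $s$. Any two such extensions differ, along $M$, by a section of $\H \cap T^s\Sigma|_M = \H^s|_M$; and since $\alpha^r$ is $s$-vertical and right-invariant, while $\H^s$ is right-invariant by \eqref{item4}, the Lie bracket $[\tilde X - \tilde X', \alpha^r]$ restricted to $M$ lands in $\H^s$ (here one uses that the flow of an $s$-vertical right-invariant field preserves the right-invariant distribution $\H^s$, so the bracket of a section of $\H^s$ with $\alpha^r$ is again a section of $\H^s$). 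Hence the class mod $\H^s$ is independent of the choice, and $D_X(\alpha) \in \Gamma(L)$ is well-defined. Smoothness and $C^\infty(M)$-linearity in $X$ are then clear from the formula, since rescaling $X$ by $f$ rescales $\tilde X$ by (an extension of) $f$, and the $T^s\Sigma$-correction terms in $[f\tilde X, \alpha^r]$ are killed mod $\H^s$.

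Next I would check the Leibniz identity $D_X(f\alpha) = f D_X(\alpha) + L_X(f)\, l(\alpha)$. One computes $(f\alpha)^r = (t^*f)\,\alpha^r$ as right-invariant vector fields, so $[\tilde X, (f\alpha)^r] = (t^*f)[\tilde X, \alpha^r] + \tilde X(t^*f)\,\alpha^r$. Restricting to $M$, $t^*f$ restricts to $f$, giving the $f D_X(\alpha)$ term; and $\tilde X(t^*f)|_M = L_X(f)$ precisely because $\tilde X$ covers $X$ (using $t = s$ on units, or more carefully that $dt(\tilde X)|_M = X$, which follows since $\tilde X|_M$ and $X$ agree as $\tilde X$ extends $X$). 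The remaining term $L_X(f)\,\alpha^r|_M$ reduces mod $\H^s$ to $L_X(f)\,l(\alpha)$. This is the easy identity.

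For the two compatibility conditions \eqref{horizontal} and \eqref{vertical}, the strategy is to express everything through right-invariant vector fields on $\Sigma$ and use the basic dictionary: the Lie bracket on $\Gamma(A)$ corresponds to the bracket of right-invariant fields, the anchor $\rho$ to $dt \circ (\cdot)^r|_M$, and the representation $\nabla$ on $L = \tilde L|_M$ to the flat $A$-connection induced by the $\Sigma$-action of Lemma \ref{eq:right-translation}, whose infinitesimal form reads $\nabla_\alpha(l(\beta))$ in terms of Lie derivatives of $\beta^r$ mod $\H$ along $\alpha^r$. For \eqref{horizontal} one takes $X = \rho(\alpha)$, i.e. $\tilde X$ can be taken to be $\alpha^r$ itself (which is $s$-vertical, but one must be careful: $\alpha^r$ is \emph{not} in $\H$ in general, so one instead writes $\alpha^r$ as its $\H$-part plus its $\H^s$-part via \eqref{item3} — and the $\H^s$-part contributes, via the Leibniz-type correction, exactly the $\nabla_{\alpha'}(l(\alpha))$ term, while the $\H$-part contributes $l([\alpha,\alpha'])$). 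For \eqref{vertical} one uses the Jacobi identity for vector-field brackets on $\Sigma$: $[\tilde X, [\alpha^r, \alpha'^r]] = [[\tilde X, \alpha^r], \alpha'^r] + [\alpha^r, [\tilde X, \alpha'^r]]$, together with the fact that $[\alpha^r, \alpha'^r] = [\alpha,\alpha']^r$ and that the brackets $[\tilde X, \alpha^r]$ and $[\alpha^r, \tilde Y]$ relate to $D_X\alpha$ and to the "horizontal shift" $\tilde{X}$ versus $\widetilde{[\rho(\alpha),X]}$; one checks that $[\alpha^r, \tilde X]$ is an admissible extension (covers $[\rho(\alpha),X]$ through $s$ and lies in $\H$ up to $\H^s$-terms), so that $[\alpha^r, [\tilde X, \alpha'^r]]|_M \equiv \nabla_\alpha(D_X\alpha') - D_{[\rho(\alpha),X]}\alpha'$ mod $\H^s$, and symmetrically for the other pair, yielding \eqref{vertical} after antisymmetrizing. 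The main obstacle is precisely this bookkeeping in \eqref{vertical}: correctly splitting each right-invariant field into its $\H$ and $\H^s$ components, tracking which corrections survive the reduction mod $\H^s$, and matching the "anchor-shifted" extensions $[\rho(\alpha),X]$ to the extensions $[\alpha^r,\tilde X]$ up to $s$-vertical error — everything else is routine once the well-definedness step is in place.
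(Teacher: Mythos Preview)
The paper does not prove this proposition in the text; it simply records that the result ``follows by a lengthy but straightforward computation'' and defers to \cite{thesis, Maria}. Your well-definedness argument and the Leibniz identity are essentially correct, with one small correction: the flow of $\alpha^r$ acts by \emph{left} multiplication $g\mapsto \varphi_\alpha^\epsilon(t(g))\cdot g$, so what you need is that $\H$ (hence $\H^s$) is stable under left translations, which follows from $\H$ being a subgroupoid; the right-invariance \eqref{item4} is not the relevant fact here.

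There is, however, a genuine gap in your sketch of \eqref{horizontal} and \eqref{vertical}, and it is not merely bookkeeping. The vector field $\alpha^r$ is $s$-vertical, so it is $s$-related to $0$, not to $\rho(\alpha)$; consequently it cannot serve as (or be corrected into) an admissible extension $\tilde X$ for $X=\rho(\alpha)$. The decomposition you invoke, ``$\H$-part plus $\H^s$-part via \eqref{item3}'', does not exist: since $\alpha^r\in T^s\Sigma$, any way of writing $\alpha^r = W+V$ with $W\in\H$ and $V\in T^s\Sigma$ forces $W\in\H\cap T^s\Sigma=\H^s$, so you never produce an $s$-lift of $\rho(\alpha)$. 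The same problem recurs in \eqref{vertical}: $\tilde X$ is $s$-related to $X$ and $\alpha^r$ is $s$-related to $0$, hence $[\alpha^r,\tilde X]$ is $s$-related to $[0,X]=0$ and is \emph{not} an admissible extension of $[\rho(\alpha),X]$ as you claim. A correct argument (as carried out in the references) compares $\tilde X$ with $\alpha^r$ through the $t$-vertical (left-invariant) extension of $\alpha$: along $M$ one has $\tilde X - \alpha^r \in T^t\Sigma$ (check $dt(\tilde X_x-\alpha_x)=\rho(\alpha)_x-\rho(\alpha)_x=0$), and the bracket of this $t$-vertical piece with $\alpha'^r$ produces the $\nabla_{\alpha'}(l(\alpha))$ term, while $[\alpha^r,\alpha'^r]|_M$ gives $l([\alpha,\alpha'])$. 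The operator $L_\alpha$ of Lemma~\ref{lemma:invariance-lie-bracket} packages exactly this computation.
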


This follows by a lengthy but straightforward computation \cite{thesis, Maria}. Its converse, an integrability theorem (for Spencer operators), is less straightforward and is one of the main results of \cite{thesis, Maria}:

\begin{theorem}\label{thm:integrability_multiplicative_distributions} For $\Sigma$ s-simply connected one has a 1-1 correspondence between 
\begin{itemize}
\item multiplicative distributions $\H$ on $\Sigma$;
\item Spencer operators on the Lie algebroid $A$ relative to some map $l: A\twoheadrightarrow E$.
\end{itemize}
The correspondence is given by \eqref{Spencer-operator}.
\end{theorem}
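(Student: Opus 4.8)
The plan is to establish the correspondence in both directions and check they are mutually inverse. The forward map — from multiplicative distributions to Spencer operators — is already provided by Proposition \ref{prop:integrability_multiplicative_distributions}, so the substance is the construction of the inverse: given a Spencer operator $D$ on $A$ relative to $l: A\twoheadrightarrow E$, produce a multiplicative distribution $\H$ on the $s$-simply connected groupoid $\Sigma$.

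First I would reconstruct, from $D$ and $l$, the data needed to define $\H$. By Lemma \ref{eq:right-translation} the line bundle (here, vector bundle) $E$, being a representation of $A$ and hence — since $\Sigma$ is $s$-simply connected — of $\Sigma$, extends canonically to $\tilde E := t^*E$ on $\Sigma$; the candidate distribution will be the kernel of a one-form $\theta\in\Omega^1(\Sigma; t^*E)$, so the task is to build $\theta$ and show it is multiplicative and regular. The natural approach is Lie's second theorem in the integrating-a-form formulation: the pair $(l, D)$ assembles into a morphism of Lie algebroids from $A$ into a suitable "jet-type" algebroid whose integration is governed by a multiplicative form on $\Sigma$. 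Concretely, one uses the van Est / path-space description: a form $\theta$ on the $s$-simply connected $\Sigma$ is determined by prescribing its restriction to $T^sM$ (the units) together with a compatible infinitesimal data along $s$-fibers, and the Leibniz identity together with the two compatibility conditions \eqref{horizontal} and \eqref{vertical} are exactly the obstructions that vanish so that this prescription is consistent (i.e. closed under the structure equations of $\Sigma$). I would first define $\theta$ on the units by $l$ itself (so that $\H^s|_M = \Ker l$), then propagate it along $s$-fibers by solving the ODE dictated by $D$ via right-invariant vector fields $\alpha^r$, and finally verify that the resulting global object is well-defined (independent of the path used), which is where $s$-simple connectivity enters.

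The key steps, in order: (1) set up the $\tilde E$-valued form $\theta$ on units via $l$ and record that $\Ker\theta$ has constant rank; (2) differentiate the desired multiplicativity $m^*\theta = \pr_1^*\theta + g\cdot\pr_2^*\theta$ to see it is equivalent to $\theta$ being right-invariant in the sense of Lemma \ref{eq:right-translation} plus a cocycle condition, and observe that this reduces the problem to finding $\theta|_{T^s\Sigma}$; (3) on each $s$-fiber, characterize $\theta$ by the transport equation $\mathcal{L}_{\tilde X}\theta(\alpha^r) = \theta([\tilde X,\alpha^r]) + (\text{lower order})$, i.e. rewrite \eqref{Spencer-operator} as a first-order PDE along the fibers with initial condition $l$ on $M$; (4) show the compatibility conditions \eqref{horizontal}–\eqref{vertical} are precisely the integrability (Frobenius-type) conditions guaranteeing a solution, and that $s$-simply connectedness makes the solution global and unique; (5) check regularity (surjectivity of $\theta$, equivalently $ds: \H\to TM$ surjective) from the Leibniz identity, so that $\H:=\Ker\theta$ is a genuine multiplicative distribution; (6) confirm that applying Proposition \ref{prop:integrability_multiplicative_distributions} to this $\H$ recovers $(l, D)$, by comparing formulas on holonomic/right-invariant sections, and conversely that starting from $\H$, extracting $(l,D)$, and re-integrating returns $\H$ — both by uniqueness of solutions of the transport equation.

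The main obstacle I expect is step (4): verifying that \eqref{horizontal} and \eqref{vertical} are exactly the right closedness conditions for the reconstruction to be consistent and path-independent on the $s$-simply connected $\Sigma$. This is essentially a Lie-theoretic integration argument — one must show that the "infinitesimal flatness" encoded by the two compatibility conditions integrates to the global cocycle identity defining multiplicativity, which typically requires either a careful homotopy/path-space argument (showing the transported value of $\theta$ is invariant under homotopies of $s$-paths) or an appeal to the general integration machinery for Lie algebroid morphisms into the relevant target. Since the paper explicitly attributes this to \cite{thesis, Maria}, I would structure the proof to isolate this point and cite that integrability theorem, while carrying out steps (1)–(3), (5), and (6) directly; these are the routine-but-necessary verifications that the infinitesimal data of a Spencer operator matches the differentiated multiplicativity equation.
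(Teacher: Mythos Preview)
The paper does not actually prove this theorem. It is stated without proof, introduced by the sentence ``Its converse, an integrability theorem (for Spencer operators), is less straightforward and is one of the main results of \cite{thesis, Maria}'', and then simply quoted. There is therefore no in-paper argument to compare your proposal against.

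Your outline is a reasonable sketch of the kind of van Est / path-space integration argument one expects in the cited references, and you correctly single out step (4)---showing that conditions \eqref{horizontal} and \eqref{vertical} are exactly the flatness conditions making the transported form well-defined on the $s$-simply connected $\Sigma$---as the non-routine core. You also correctly note that the paper itself defers precisely this point to \cite{thesis, Maria}. So your plan is consistent with how the paper treats the result, but there is nothing further to compare: the paper uses this theorem as a black box.
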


The relevant Spencer operator of a Jacobi structure $(L,\{\cdot,\cdot\})$ on $M$ is the classical Spencer operator associated to the line 
bundle $L$. Theorem \ref{theorem-2} of section \ref{jacobi} states that the multiplicative distribution 
$\H\subset T\Sigma$ integrating the classical Spencer operator makes the pair $(\Sigma,\H)$ into a {\it contact groupoid} in the sense of the next section.

\section{Contact groupoids}
\label{Contact groupoids}

In this section we recall the notion of contact groupoid \cite{Dazord1, Dazord}, the dual point of view (using multiplicative forms) and we discuss the first consequences of the compatibility of the contact structure with the groupoid structure.

\begin{definition} A {\bf contact groupoid} over $M$ is a pair $(\Sigma,\H)$ consisting of a Lie groupoid $\Sigma$ over $M$ and contact structure $\H$ on $\Sigma$, with the property that $\H$ is multiplicative in the sense of definition \ref{def:multiplicative_distributions}.
\end{definition}

Using the previous section, since $\H$ is multiplicative:
\begin{itemize}
\item the line bundle $T\Sigma/\H$ is determined by its restriction $L$ to $M$. Moreover $L$ is a representation of $\Sigma$;
\item contact groupoids can also be described as groupoids $\Sigma$ endowed with a one dimensional representation $L$ and a multiplicative one form $\theta\in \Omega^1(\Sigma, t^*L)$
which is of contact type.
\end{itemize}

\begin{example}\label{trivial-but-not-2}\rm \ 
When $L$ is trivial as a line bundle, the action of $\Sigma$ on $L$ may still be non-trivial and it will be encoded in a 1-cocycle $r$ on $\Sigma$. Hence, in this case, the structure of contact groupoid is encoded in a contact form $\theta$ and a 1-cocycle $r$; working out the multiplicativity conditions, we find the (rather puzzling) equation that is taken as an axiom in \cite{Kerbrat}.
\end{example}

Recall that the {\bf contact orthogonal}  $\mathcal{F}_x^c\subset \H_x$ of a subspace $\mathcal{F}_x\subset\H_x$ is the orthogonal w.r.t. the non-degenerate pairing \eqref{eq:curvature} induced by $\H$. A submanifold $N\subset M$ is {\bf Legendrian} if $TN\subset\H|_N$ and $(T_xN)^c=T_xN$, for all $x\in N.$

\begin{proposition}\label{lemma:contact-groupoid} In a contact groupoid $(\Sigma, \H)$ over $M$:
\begin{enumerate}
\item\label{itemi} The unit map $u: M\hookrightarrow \Sigma$ is a Legendrian embedding.
\item\label{itemii} $\H^{s}= (\H^{t})^{c}$.
\end{enumerate}
\end{proposition}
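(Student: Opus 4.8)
The plan is to prove the two statements in order, using the multiplicativity of $\H$ and the description of the contact structure in terms of the $L$-valued $1$-form $\theta$ of \eqref{eq:one_form}.

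For part \eqref{itemi}, I would first argue that $T_xM \subset \H_x$ for every unit $x \in M$. Since $\theta$ is multiplicative, restricting the multiplicativity identity $(m^*\theta)_{(g,h)} = \pr_1^*\theta + g\cdot(\pr_2^*\theta)$ to the units and using that $m$ restricted to the units is the identity, one gets that $\theta$ vanishes on $T_xM$; equivalently $u^*\theta = 0$, so $TM \subset \H|_M$. (This is the infinitesimal shadow of the fact that units act trivially.) Next I must show $(T_xM)^c = T_xM$ inside $\H_x$. Since $c_\H$ is non-degenerate and $\dim \H_x = \dim T\Sigma - 1 = 2\dim M - 1$, while $\dim T_xM = \dim M$, a dimension count shows $\dim (T_xM)^c = \dim \H_x - \dim T_xM = \dim M - 1$, which is \emph{not} enough by itself — so Legendrian submanifolds of a contact manifold of this dimension would have dimension $\dim M - 1$, not $\dim M$. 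Here I pause: the ambient contact manifold is $\Sigma$, of dimension $2\dim M + 1$ if $\H$ has codimension one — wait, $\dim \Sigma = 2\dim M$ need not hold. Let me instead not count and argue directly: using \eqref{item3} one has $T_x\Sigma = T_x^sM \oplus \text{(horizontal)}$, and the decomposition of $T_x\Sigma$ coming from Lemma \ref{lemma-2} applied to the contact manifold $\Sigma$ identifies $\H_x$ with $\Gamma(\hom(\H,L))$-type data; the key computational input is that for $v\in T_xM$ and $w \in T_x^s\Sigma$ one has $c_\H(v,w) = \theta([v,w])$, and multiplicativity forces the bracket of a "unit-tangent" vector with an $s$-vertical vector to again lie appropriately, making $c_\H(v, w) = 0$ for $w \in \H_x^s$. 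Combined with the fact that $T_x^s\Sigma \cap T_xM = 0$ and a dimension count $\dim T_xM + \dim \H_x^s = \dim \H_x$, this gives $(T_xM)^c \supset T_xM$ and then equality by non-degeneracy. So the main obstacle is organizing this dimension bookkeeping correctly and extracting the vanishing $c_\H(T_xM, \H_x^s) = 0$ from multiplicativity — most cleanly done via the Spencer operator $D$ of Proposition \ref{prop:integrability_multiplicative_distributions}: the defining formula \eqref{Spencer-operator} says $D_X(\alpha) = [\tilde X, \alpha^r]|_M \bmod \H^s$, and since $\tilde X$ can be taken tangent to $M$ and $\alpha^r$ is $s$-vertical, this is exactly the pairing $c_\H$ between a unit direction and an $s$-vertical direction; one then observes that the Spencer operator takes values modulo $\H^s$ in $L$, while $c_\H$ lands in $L$, and the Legendrian condition is equivalent to a statement about $D$ restricted to units, which I would verify using the compatibility conditions \eqref{horizontal}--\eqref{vertical} — or more simply, note that $\theta|_M = 0$ already forces $c_\H(T_xM, \H_x) $ to be controlled, and $\H_x^s \subset \H_x$ gives the claim directly once we know $T_xM \subset \H_x$ (applying the formula $c_\H(v,w) = \theta([v,w])$ with $v$ a section of $TM$ extended arbitrarily and $w \in \Gamma(\H^s)$, using that $[v,w]$ mod $\H$ is, by right-invariance of $\H^s$ under the flow of $v$ — which is a groupoid automorphism near the units because $v$ is tangent to $M$ — again in $\H^s \subset \H$, hence $\theta([v,w]) = 0$).

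For part \eqref{itemii}, I would show the two inclusions $\H^s \subset (\H^t)^c$ and $(\H^t)^c \subset \H^s$ separately, working fiberwise over a unit $x\in M$ (by right/left translation the general case reduces to units). For $\H^s \subset (\H^t)^c$: take $v \in \H_x^s$ and $w \in \H_x^t$; I must show $c_\H(v,w) = \theta([v,w]) = 0$. The point is that $v$ extends to a right-invariant vector field $\alpha^r$ with $\alpha \in \Gamma(A)$ and $\alpha$ landing in $\H^s$ (i.e. $l(\alpha) = 0$), which is $s$-vertical; $w$ extends to a left-invariant vector field $\beta^l$ which is $t$-vertical and, by the left-handed analogue of \eqref{item4}, stays in $\H^t$. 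Since right-invariant and left-invariant vector fields commute, $[\alpha^r, \beta^l] = 0$, so $[v,w] = 0$ and $c_\H(v,w) = 0$; hence $v \in (\H^t)^c$. This is the slick argument and I expect it to work with only the care that $\alpha$ can be chosen with $l(\alpha) = 0$ precisely because $v \in \H^s$ and $l = \pr \bmod \H^s$. For the reverse inclusion $(\H^t)^c \subset \H^s$ I would use a dimension count: non-degeneracy of $c_\H$ gives $\dim \H^t + \dim (\H^t)^c = \dim \H$; on the other hand \eqref{item3} and its $t$-analogue, together with the transversality of $T^s\Sigma$ and $T^t\Sigma$ along $M$ (both projecting isomorphically to $TM$ and intersecting in the isotropy directions — or rather, at a unit, $T_x^s\Sigma + T_x^t\Sigma = T_x\Sigma$ with intersection $A_x \cap \ker(\text{anchor})$... let me be more careful: at a unit $x$, $\dim T_x^s\Sigma = \dim A = r$, $\dim T_x\Sigma = \dim M + r$, and $T_x^s\Sigma \cap T_xM = 0$ since $ds|_{T_xM} = \mathrm{id}$), give $\dim \H_x^s = \dim \H_x - \dim M = \dim \H_x^t$ by symmetry, and also $\dim(\H^t)^c = \dim \H - \dim \H^t = \dim \H^s$; so the first inclusion, being between spaces of equal dimension, is an equality. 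The main obstacle here is being scrupulous about the dimension counts at a unit (where $s$- and $t$-fibers interact with $TM$), but once \eqref{itemi} and the commuting of left- and right-invariant fields are in hand, \eqref{itemii} follows quickly. I would present part \eqref{itemii} first via the commuting-vector-fields argument for one inclusion and conclude by the dimension count, and present part \eqref{itemi} via the $\theta|_M = 0$ observation plus the invariance of $\H^s$ under flows tangent to the units.
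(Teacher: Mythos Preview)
Your argument for part~\eqref{itemii} is correct and in fact cleaner than the paper's: the paper obtains the inclusion $\H^t \subset (\H^s)^c$ by invoking Lemma~\ref{lemma:invariance-lie-bracket} (the same invariance lemma that drives part~\eqref{itemi}), whereas your observation that right-invariant and left-invariant vector fields commute gives the inclusion in one line. Both then finish by the same dimension count.

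Your argument for part~\eqref{itemi}, however, contains a genuine error. You claim $c_\H(T_xM, \H^s_x) = 0$, justified by asserting that the flow of a vector field $v$ tangent to $M$ is a groupoid automorphism preserving $\H^s$. Neither step holds: a vector field on $M$, extended arbitrarily to $\Sigma$, is not multiplicative, so its flow is not a groupoid automorphism and has no reason to preserve $\H^s$. More decisively, the conclusion itself is \emph{false}. In a contact groupoid the algebroid has rank $\dim M + 1$, so $\dim \H_x = 2\dim M$ and $\H_x = T_xM \oplus \H^s_x$ with both summands of dimension $\dim M$. Non-degeneracy gives $\dim (T_xM)^c = \dim M$; if $\H^s_x \subset (T_xM)^c$ held, one would get $(T_xM)^c = \H^s_x$, whence $T_xM \cap (T_xM)^c = T_xM \cap \H^s_x = 0$, contradicting the Legendrian property you are trying to prove.

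The paper proceeds in the opposite direction. First $T_xM \subset (T_xM)^c$ is shown directly: for $X, Y \in \X(M)$ choose extensions $\tilde X, \tilde Y \in \Gamma(\H)$ with $\tilde X|_M = X$, $\tilde Y|_M = Y$; then $c_\H(X,Y)|_M = \theta([\tilde X, \tilde Y])|_M = \theta([X,Y]) = 0$ since $[X,Y] \in \X(M) \subset \Gamma(\H)$. For the reverse inclusion the paper uses Lemma~\ref{lemma:invariance-lie-bracket}: for $\alpha_x \in \H^s_x$ and $X \in \Gamma(\H)$, the value $c_\H(\alpha_x, X_x)$ depends only on $dt(X_x) \in T_xM$; hence if $\alpha_x \in (T_xM)^c$ then $c_\H(\alpha_x, \cdot)$ vanishes on all of $\H_x$, forcing $\alpha_x = 0$. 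This yields $(T_xM)^c \cap \H^s_x = 0$ and thus $(T_xM)^c \subset T_xM$. The mechanism is exactly the opposite of what you attempted: rather than $c_\H(T_xM, \H^s_x)$ vanishing, the point is that $c_\H(\alpha_x, \cdot)$ for $\alpha_x \in \H^s_x$ is \emph{determined} by its values on $T_xM$, so it cannot vanish there without vanishing identically.
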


Denote by $\X_{\mathrm{Reeb}}^{\mathrm{inv}}(\Sigma,\H)\subset \X(\Sigma)$ the subpace defined by
$$\X_{\mathrm{Reeb}}^{\mathrm{inv}}(\Sigma,\H):=\X_{\mathrm{Reeb}}(\Sigma,\H)\cap \X^{\mathrm{inv}}(\Sigma).$$
Multiplicativity of $\theta$, Corollary \ref{corollary-1} and Lemma \ref{eq:right-translation} imply the following result.

\begin{corollary}\label{corollary:right-invariant} The isomorphism
$\theta: \X_{\mathrm{Reeb}}(\Sigma,\H) \stackrel{\sim}{\rmap} \Gamma(t^*L)$ of Corollary \ref{corollary-1} is invariant under right translation \eqref{inf-right-transl}, i.e. $\theta(r(v))=\theta(v)$. In particular, $\theta$ restricts to a vector space isomorphism
$$\theta|_{\X_{\mathrm{Reeb}}^{\mathrm{inv}}(\Sigma,\H)}: \X_{\mathrm{Reeb}}^{\mathrm{inv}}(\Sigma,\H) \stackrel{\sim}{\rmap} \Gamma(L),$$
with inverse $u\mapsto R_{u^r}.$
\end{corollary}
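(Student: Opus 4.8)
The plan is to combine Corollary \ref{corollary-1} (applied to the contact groupoid $(\Sigma, \H)$, viewed as a contact manifold in its own right with quotient line bundle $\tilde L = T\Sigma/\H$) with the identification $\tilde L \cong t^*L$ from Lemma \ref{eq:right-translation}. Corollary \ref{corollary-1} already gives us the vector space isomorphism $\theta|_{\X_{\mathrm{Reeb}}(\Sigma,\H)}\colon \X_{\mathrm{Reeb}}(\Sigma,\H) \xrightarrow{\sim} \Gamma(t^*L)$, so the only genuinely new content is: (i) that this isomorphism intertwines the right translations \eqref{inf-right-transl} on source-vertical vectors with the right translations $r\colon t^*L \to \tilde L$ of Lemma \ref{eq:right-translation}; and (ii) that consequently it restricts to an isomorphism on the right-invariant parts, with the stated inverse.

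First I would establish the right-translation invariance $\theta(r(v)) = \theta(v)$. The statement $r(v)$ refers to the right translation \eqref{inf-right-transl} acting on $T^s\Sigma$; since $\X_{\mathrm{Reeb}}(\Sigma,\H)$ need not consist of $s$-vertical vector fields, one should interpret this fiberwise, i.e. for $v \in \H^s_a$ (or rather for the relevant transported piece) — but the cleanest route is to unwind the claim at the level of the quotient. By \eqref{item3} we have $\tilde L = T^s\Sigma/\H^s$, and by \eqref{item4} the right translations descend to isomorphisms $r_g\colon \tilde L_a \to \tilde L_{ag}$; Lemma \ref{eq:right-translation} says precisely that under $\tilde L \cong t^*L$ these become the representation-induced maps, i.e. $r_g(t^*u) = t^*u$ composed appropriately. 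Now $\theta$ is by definition the canonical projection $T\Sigma \to \tilde L$, and multiplicativity of $\theta$ (equivalently, multiplicativity of $\H$ as a subgroupoid of $T\Sigma$) is exactly the assertion that this projection commutes with the right translations in $T\Sigma$ and in $\tilde L$. Hence $\theta(r_g v) = r_g(\theta(v))$, which under the identification with $t^*L$ reads $\theta(r(v)) = \theta(v)$ in the sense claimed. I would phrase this by taking $v \in \X_{\mathrm{Reeb}}(\Sigma,\H)$, noting $\theta(v) \in \Gamma(t^*L)$, and checking the invariance pointwise using the multiplicativity identity for $\theta$ applied to pairs $(a, g)$ with the second argument a unit-direction vector, together with the formula $r_g(X_a) = (dm)_{a,g}(X_a, 0_g)$ recalled just before \eqref{item4}.

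Once invariance is in hand, the restriction statement is formal: $\X_{\mathrm{Reeb}}^{\mathrm{inv}}(\Sigma,\H) = \X_{\mathrm{Reeb}}(\Sigma,\H) \cap \X^{\mathrm{inv}}(\Sigma)$ is carried by the isomorphism $\theta$ onto the right-invariant sections of $t^*L$, and a right-invariant section of $t^*L$ is precisely (the pullback of) a section $u \in \Gamma(L)$ via $u \mapsto u^r$ — this is the isomorphism $\Gamma(A) \cong \X^{\mathrm{inv}}(\Sigma)$ generality transported to the line bundle, or more directly the statement that $r\colon t^*L \xrightarrow{\sim} \tilde L$ of Lemma \ref{eq:right-translation} identifies right-invariant sections with $\Gamma(L)$. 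So $\theta$ restricts to $\X_{\mathrm{Reeb}}^{\mathrm{inv}}(\Sigma,\H) \xrightarrow{\sim} \Gamma(L)$. For the inverse formula, recall from Section \ref{Contact manifolds and their brackets} that $\theta^{-1}$ on a contact manifold sends $u$ to the Reeb vector field $R_u$ associated to $u$; applied here with $u$ replaced by $u^r \in \Gamma(t^*L)$ this gives $R_{u^r}$, and one checks $R_{u^r}$ is right-invariant precisely because $u^r$ is (using invariance of $\theta$ and uniqueness in the defining property of $R_{(-)}$). Hence the inverse is $u \mapsto R_{u^r}$.

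The main obstacle I anticipate is purely notational/bookkeeping: being careful about what ``$r(v)$'' means when $v \in \X_{\mathrm{Reeb}}(\Sigma,\H)$ is not $s$-vertical, and making sure the identification $\tilde L \cong t^*L$ (Lemma \ref{eq:right-translation}) is used consistently so that ``$\theta(r(v)) = \theta(v)$'' is an honest equality of sections of $t^*L$ rather than of the two a priori different bundles $\tilde L_a$ and $\tilde L_{ag}$. The actual mathematics is entirely a consequence of multiplicativity of $\theta$ plus Corollary \ref{corollary-1} and Lemma \ref{eq:right-translation}; there is no hard analytic or algebraic step, only the verification that the various canonical isomorphisms compose as expected.
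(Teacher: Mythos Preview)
Your proposal is correct and follows exactly the approach indicated by the paper, which does not give a detailed proof but simply states that the corollary follows from multiplicativity of $\theta$, Corollary \ref{corollary-1}, and Lemma \ref{eq:right-translation}. Your write-up unpacks precisely these three ingredients, and the notational concerns you flag are real but minor bookkeeping matters rather than gaps.
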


To prove Proposition \ref{lemma:contact-groupoid}, we need the following:

\begin{lemma}\label{lemma:invariance-lie-bracket}
Let $\theta\in\Omega^1(\Sigma,t^*E)$ be a multiplicative form with values in some representation $E$ of $\Sigma$. Then, for any $X\in\Gamma(\ker\theta)$ and any $\alpha^r\in\X^{\mathrm{inv}}(\Sigma)$,
\begin{eqnarray}\label{invariance-bracket}
\theta_g([\alpha^r,X])=\theta_{t(g)}([\alpha^r,\tilde X]),\end{eqnarray}
where $\tilde X\in\Gamma(\ker\theta)$ is any other vector field with the property that $dt(\tilde X)=dt(X)$.
\end{lemma}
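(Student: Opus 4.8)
The plan is to show that the quantity $\theta_g([\alpha^r,X])$ is independent of the choice of $X \in \Gamma(\ker\theta)$ within its $dt$-class, and then to identify its value along the unit manifold. First I would observe that both sides are well-defined: the right-hand side $\theta_{t(g)}([\alpha^r,\tilde X])$ makes sense because $\alpha^r$ is tangent to the $t$-fibers only if we are careful -- actually $\alpha^r$ is $s$-invariant and tangent to $s$-fibers, so I should instead exploit that $[\alpha^r, -]$ preserves $\ker\theta$ modulo lower-order terms. The cleanest route uses multiplicativity of $\theta$ directly: since $\theta \in \Omega^1(\Sigma, t^*E)$ is multiplicative, the right-invariant vector fields $\alpha^r$ act on sections of $\ker\theta$ in a way compatible with the groupoid structure, and in particular the flow of $\alpha^r$ consists of (local) left translations, which are diffeomorphisms of $\Sigma$ covering the identity on the $t$-fibers.

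The key computation is the following. Fix $X \in \Gamma(\ker\theta)$ and write $Y = [\alpha^r, X]$. I want to compute $\theta_g(Y)$ and show it depends only on $dt_g(X_g) \in T_{t(g)}M$ and on $\alpha$. The idea is to differentiate the multiplicativity identity $(m^*\theta)_{(g,h)} = \pr_1^*\theta + g\cdot(\pr_2^*\theta)$ along the flow generated by $\alpha^r$ in the first variable. Concretely, let $\phi^\epsilon$ denote the flow of $\alpha^r$; since $\alpha^r$ is right-invariant, $\phi^\epsilon$ acts on $\Sigma$ by $\phi^\epsilon(g) = \phi^\epsilon(t(g)) \cdot g$ where $\phi^\epsilon$ on the base comes from the anchor, i.e. $\phi^\epsilon$ is left translation by an arrow depending only on $t(g)$. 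Pulling back $\theta$ by $\phi^\epsilon$ and using that left translations are groupoid morphisms intertwining the $E$-action, one gets $(\phi^\epsilon)^*\theta = \theta$ up to the action of the corresponding arrow; differentiating at $\epsilon = 0$ gives that $\mathcal{L}_{\alpha^r}\theta$, evaluated on any $X \in \ker\theta$, produces $\theta([\alpha^r, X])$ equal to a term that depends on $\theta(X) = 0$ and the base data $dt(X)$ only. This is exactly the statement that the "vertical" part of the dependence of $\theta([\alpha^r,X])$ on $X$ disappears on $\ker\theta$, and the remaining "horizontal" dependence is through $dt(X)$ alone, which is precisely \eqref{invariance-bracket} once we transport from $g$ to $t(g)$ via the right translation $r_{g^{-1}}$ (using Corollary \ref{corollary:right-invariant}, or rather the invariance of $\theta$ under right translations established for multiplicative forms).

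Alternatively -- and this is probably the form I would actually write up -- I would argue more structurally. By Lemma \ref{eq:right-translation} and the remarks preceding it, $r: t^*L \to \tilde L$ is an isomorphism and $\theta$ is right-invariant in the sense of Corollary \ref{corollary:right-invariant}. So $\theta_g([\alpha^r, X]) = \theta_{t(g)}(r_{g^{-1}}([\alpha^r,X]_g))$, and since $\alpha^r$ is right-invariant its bracket with $X$ transforms in a controlled way: $r_{g^{-1}}([\alpha^r, X]_g) = [\alpha^r, X']_{t(g)}$ where $X'$ is the "right translate" of $X$ near the unit, which lies in $\ker\theta$ (as $\ker\theta$ is multiplicative, its $s$-vertical part is right-invariant, and $\theta$ itself is right-invariant so $\ker\theta$ is preserved) and satisfies $dt(X') = dt(X)$ at $t(g)$ because $dt \circ r_{g^{-1}} = dt$ on the relevant subspace (right translation covers the identity on $M$ via $t$... here I need to be careful, since $r_g$ goes between $s$-fibers; the correct statement is that $dt$ is right-invariant in the appropriate sense because $t \circ r_g = t$ is false in general -- rather $t(ag) = t(a)$, so $dt$ IS right-invariant). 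Hence $\theta_g([\alpha^r,X])$ equals $\theta_{t(g)}([\alpha^r, X'])$ where $X' \in \ker\theta$ and $dt(X'_{t(g)}) = dt(X_g)$; this is \eqref{invariance-bracket} modulo the standard fact that the value $\theta_{y}([\alpha^r,\tilde X])$ at a unit $y$ depends on $\tilde X \in \ker\theta$ only through $dt(\tilde X)$, which in turn follows because any two such vector fields differ by a section of $\ker\theta \cap T^t\Sigma = (\H^t)$, and $[\alpha^r, -]$ preserves $\ker\theta$ on $\H^t$ since $\alpha^r$ is tangent to $s$-fibers...

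The main obstacle I anticipate is pinning down precisely why $\theta_g([\alpha^r, X])$ is insensitive to replacing $X$ by $X + W$ with $W \in \ker\theta$ and $dt(W) = 0$, i.e. $W \in \Gamma(\H^t)$: one must show $\theta_g([\alpha^r, W]) = 0$ for such $W$. This should follow from multiplicativity: $\theta$ vanishes on $\H = \ker\theta$, and one needs that the Lie derivative $\mathcal{L}_{\alpha^r}$ of the "coefficient" (the $t^*E$-valued nature of $\theta$) contributes a term involving $\nabla$ that kills the $\H^t$-direction because $\alpha^r$ restricted appropriately is "vertical for $t$" -- in fact $\alpha^r$ need not be tangent to $t$-fibers, so the real input is the specific interaction captured by \eqref{Spencer-operator}/\eqref{vertical}, i.e. the vertical compatibility condition for the induced Spencer operator. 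So concretely: expand $\theta([\alpha^r, W])$ using $\theta([\alpha^r,W]) = \mathcal{L}_{\alpha^r}(\theta(W)) - (\mathcal{L}_{\alpha^r}\theta)(W) + \nabla\text{-correction}$; the first term is $\mathcal{L}_{\alpha^r}(0) = 0$ since $W \in \ker\theta$; the second, by multiplicativity of $\theta$ (which forces $\mathcal{L}_{\alpha^r}\theta$ to again be "multiplicative-like", hence expressible via the Spencer operator $D$), applied to $W \in \H^t = (\H^s)^c$... this is where Proposition \ref{lemma:contact-groupoid}\eqref{itemii} ($\H^s = (\H^t)^c$) and the structure of $D$ enter, and I expect the $\H^t$-direction to be annihilated precisely because $D_{dt(W)} = D_0 = 0$ vanishes. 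Once this vanishing is established, combined with right-invariance of $\theta$ and of $dt$, the identity \eqref{invariance-bracket} follows.
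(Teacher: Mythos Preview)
Your first, flow-based idea is exactly the paper's argument and is the one you should carry out. One introduces the twisted Lie derivative
\[
(L_\alpha\theta)_g \;=\;\frac{d}{d\epsilon}\Big|_{\epsilon=0}(\varphi^\epsilon_{\alpha^r}(g))^{-1}\cdot(\varphi^\epsilon_{\alpha^r})^*\theta|_{\varphi^\epsilon_{\alpha^r}(g)},
\]
observes that for $X\in\Gamma(\ker\theta)$ one has $\theta_g([X,\alpha^r]) = g\cdot (L_\alpha\theta)_g(X)$, and then checks directly from multiplicativity and the identity $\varphi^\epsilon_{\alpha^r}(g)=\varphi^\epsilon_\alpha(t(g))\cdot g$ that
\[
g\cdot (L_\alpha\theta)_g(X)\;=\;\frac{d}{d\epsilon}\Big|_{\epsilon=0}(\varphi^\epsilon_\alpha(t(g)))^{-1}\cdot\theta\bigl(d\varphi^\epsilon_\alpha(dt(X))\bigr),
\]
which visibly depends only on $t(g)$ and $dt(X)$. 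This is short and self-contained; your sketch of it is essentially correct, just not executed.

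Your ``alternative, structural'' route --- the one you say you would actually write up --- has real gaps. First, the lemma is stated for an arbitrary multiplicative form with values in an arbitrary representation $E$; nothing in the hypotheses says $\theta$ is of contact type or that $E$ is a line bundle, so you cannot invoke non-degeneracy, contact orthogonals, or Proposition~\ref{lemma:contact-groupoid}. Second, even in the contact setting, appealing to Proposition~\ref{lemma:contact-groupoid}\eqref{itemii} (that $\H^s=(\H^t)^c$) is circular: in the paper that proposition is \emph{proved using this very lemma}. Third, your formula $\theta_g([\alpha^r,X])=\theta_{t(g)}\bigl(r_{g^{-1}}([\alpha^r,X]_g)\bigr)$ is undefined: right translation $r_{g^{-1}}$ acts only on $T^s\Sigma$, and there is no reason for $[\alpha^r,X]_g$ to be tangent to the $s$-fibre; what is right-invariant is the quotient map $T^s\Sigma\to T^s\Sigma/\H^s$, not $\theta$ on all of $T\Sigma$. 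For these reasons the reduction to showing $\theta_g([\alpha^r,W])=0$ for $W\in\Gamma(\H^t)$ does not go through as written, and the attempted rescue via the Spencer operator and $(\H^s)^c=\H^t$ is unavailable at this stage of the argument.
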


\begin{proof}
For any integer $k$ and any section $\alpha$ of the Lie algebroid of $\Sigma$, consider 
\[\Lie_{\alpha}:\Omega^k(\Sigma,t^*E)\to \Sigma^k(\G,s^*E),\] 
\[(\Lie_{\alpha}\theta)_g:=\frac{d}{d\epsilon}\big{|}_{\epsilon=0}(\varphi^{\epsilon}_{\al^r}(g))^{-1}\cdot(\varphi^\epsilon_{\al^r})^*\theta|_{\varphi^\epsilon_{\al^r}(g)}\]
where $\varphi_{\al^r}^\epsilon:\Sigma\to\Sigma$ is the flow of $\al^r$. In general (cf. e.g. Lemma 3.8 of \cite{Maria} or \cite{thesis}), 
\[[i_X,L_\al](\theta)_g=g^{-1}\cdot\theta_g([X,\al^r\, ]).\]
\[\theta_g([X,\al^r])=g\cdot L_\al(\theta)_g(X).
\]

Hence, to prove (\ref{invariance-bracket}), it suffices to show that the last expression does not depend on $g$ and $X$, but only on $t(g)$ and $dt(X)$. For that, we remark that: 
\[g\cdot L_\al(\theta)_g(X)=\frac{d}{d\epsilon}\big{|}_{\epsilon=0}(\varphi^{\epsilon}_{\al}(t(g)))^{-1}\cdot \theta(d\varphi^\epsilon_\al(dt(X))),\] 
where $\varphi_\al:M\to\Sigma$ is the restriction of $\varphi_{\al^r}$ to $M$. Indeed, 
\begin{eqnarray*}\begin{split}
g\cdot (L_\al\theta)(X) &=g\cdot\frac{d}{d\epsilon}\big{|}_{\epsilon=0}(\varphi^{\epsilon}_{\al^r}(g))^{-1}\cdot(\varphi^\epsilon_{\al^r})^*\theta|_{\varphi^\epsilon_{\al^r}(g)}(X)\\
&=g\cdot\frac{d}{d\epsilon}\big{|}_{\epsilon=0}(\varphi_{\al}^\epsilon(t(g))\cdot g)^{-1}\cdot (\theta(dm(d\varphi^\epsilon_\al(dt(X)),X))\\
&=g\cdot\frac{d}{d\epsilon}\big{|}_{\epsilon=0}g^{-1}\cdot(\varphi^{\epsilon}_{\al}(t(g)))^{-1}\cdot (\theta (dm(d\varphi^\epsilon_\al(dt(X)),X))\\
&=\frac{d}{d\epsilon}\big{|}_{\epsilon=0}(\varphi^{\epsilon}_{\al}(t(g)))^{-1}\cdot (\theta(d\varphi^\epsilon_\al(dt(X)))-\varphi_{\al}^\epsilon(t(g))\cdot\theta(X))\\
&=\frac{d}{d\epsilon}\big{|}_{\epsilon=0}(\varphi^{\epsilon}_{\al}(t(g)))^{-1}\cdot \theta(d\varphi^\epsilon_\al(dt(X))),\end{split}
\end{eqnarray*}
where it is used that the flow of a right invariant vector field $\al^r$ is given by $\varphi_{\al^r}^\epsilon(g)=\varphi_{\al}^\epsilon(t(g))\cdot g$ and therefore for a fixed $\epsilon$, $d\varphi_{\al^r}^\epsilon=dm(d\varphi_{\al}^\epsilon\circ dt,id)$. 
\end{proof}

\begin{proof}[Proof of Proposition \ref{lemma:contact-groupoid}] For item \ref{itemi}, to show that $TM\subset TM^c$ let $X,Y\in\X(M),$ and choose any $t$-projectable extensions $\tilde X,\tilde Y\in\Gamma(\H)$ of $X,Y$ respectively 
with the property 
$\tilde X|_M=X,\tilde Y|_M=Y.
$
This can  be done because for multiplicative distributions $\H$ the restriction of $dt:T\Sigma\to TM$ to $\H$ is surjective. With this 
$$c_\H(X,Y)(1_x)=[\tilde X,\tilde Y](1_x)\mod \H=[X,Y](x)\mod \H=0,$$
where the last equality holds since $[X,Y]\in TM\subset \H$. 
For the other inclusion split $\H_x$, $x\in M$, as the direct sum $T_xM\oplus \H^s_x$ and prove that $\al_x\in T_xM^c$ only if $\al_x\in T_xM$. If not, without loss of generality $\al_x\in \H^s$, and Lemma \ref{lemma:invariance-lie-bracket} shows that
\[
c_\H(\al_x,X_x)=c_\H(\al_x,dt(X))=0,\ \ \forall X\in\H,
\]
which only happens if $\al_x=0$ as $c_\H$ is non-degenerate. This last equation shows that $TM^c=TM$. A similar computation shows that $\H^t\subset (\H^s)^c$. For the other inclusion note that as $TM$ is Legendrian then $2\rank TM=\rank \H$ on the one hand, and on the other $\rank\H=\rank TM+\rank \H^s=\rank TM+\rank \H^t$. A simple dimension count shows that $\H^t=(\H^s)^c$.
\end{proof}

\section{From contact groupoids to Jacobi manifolds}\label{From contact groupoids to Jacobi manifolds}

In this section we
explain the relevance of Spencer operators to the study of Jacobi structures (relevance that was already indicated in 
Proposition \ref{cor-Jac-Spencer}). For clarity, recall that for a contact groupoid $(\Sigma,\H)$ we have:
\begin{itemize}
\item the (normal) line bundle $\tilde L$ of the contact structure, $\tilde L=T\Sigma/\H;$
\item the restriction $L$ of $\tilde L$ to $M$, which is a representation of $\Sigma$;
\item the vector bundle isomorphism $r:t^*L\to\tilde L,t^*(u)\mapsto u^r$ induced by right translations \eqref{inf-right-transl} on $\Sigma$ (Lemma \ref{eq:right-translation});
\item the isomorphism $\theta:\X^{\mathrm{inv}}_{\mathrm{Reeb}}(\Sigma, \H)\to \Gamma(L)$, whose inverse associates to $u$ the Reeb vector field $R_{u^r}$ of the coresponding $u^r\in \Gamma(\tilde L)$
(Corollary \ref{corollary:right-invariant});
\item the Lie algebroid $A$ of $\Sigma$ and the  Spencer operator $D:\Gamma(A)\to \Omega^1(M,L)$ associated to $\H$ (Proposition \ref{prop:integrability_multiplicative_distributions}).
\end{itemize}

\begin{theoremz}\label{theorem-1} Let $(\Sigma,\H)$ be a contact groupoid over $M$. Then:
\begin{enumerate}
\item[1.] there exists a unique Jacobi structure $(L,\{\cdot,\cdot\})$ over $M$ with the property that the target map $t:\Sigma\to M$ is a Jacobi map with bundle component $r:t^*L\simeq\tilde L$,
\item[2.] the Lie algebroid $A$ of $\Sigma$ is isomorphic to the Lie algebroid associated to $(M,L,\{\cdot,\cdot\})$, via the Lie algebroid isomorphism
\[\Phi:J^1L\to A,\quad \Phi(j^1u)= R_{u^r}|_M,\]
\item[3.] after the identification of $A$ with $J^1L$, the Spencer operator $D$ associated to $\H$ becomes the classical Spencer operator \eqref{classical Spencer operator}.
\end{enumerate}
\end{theoremz}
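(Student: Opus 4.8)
The plan is to unwind the three claims using the structural results already in hand, especially Corollary \ref{corollary:right-invariant} (which gives the isomorphism $u\mapsto R_{u^r}|_M$ between $\Gamma(L)$ and $\X^{\mathrm{inv}}_{\mathrm{Reeb}}(\Sigma,\H)$), Proposition \ref{lemma:contact-groupoid} (the unit section is Legendrian and $\H^s=(\H^t)^c$), and Proposition \ref{cor-Jac-Spencer} (which reduces ``being a Jacobi structure'' to ``the classical Spencer operator is a Spencer operator''). The key observation that ties everything together is that the Reeb bracket of the contact manifold $(\Sigma,\H)$ restricts to right-invariant vector fields: since $[R_{u^r},R_{v^r}]=R_{\{u^r,v^r\}}$ and the bracket of right-invariant vector fields is right-invariant, the assignment $u\mapsto R_{u^r}|_M$ transports the Reeb bracket on $\Gamma(\tilde L)$ (restricted to invariant sections) to a bracket on $\Gamma(L)$; this is the candidate $\{\cdot,\cdot\}$. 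So the first step is to define $\{u,v\}$ by $R_{\{u,v\}^r}=[R_{u^r},R_{v^r}]$ and to check it is local (immediate, being a restriction of the Reeb bracket, which is a bidifferential operator) and satisfies the Jacobi identity (inherited from the Lie bracket of vector fields).

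For part 2, I would define $\Phi\colon J^1L\to A$ on holonomic sections by $\Phi(j^1u)=R_{u^r}|_M$ and extend $C^\infty(M)$-linearly; well-definedness and the claim that $\Phi$ is a vector bundle isomorphism follow from Lemma \ref{lemma-2} together with the Spencer decomposition (Remark \ref{Spencer-dec}), since both sides decompose as $\Gamma(L)\oplus\Gamma(\hom(TM,L))$ with \emph{the same} module structure $f\cdot(u,\phi)=(fu,\phi+df\otimes u)$ — here one uses that $\H^s$ is identified via $c_\H$ with $\hom(\H|_M/\dots,L)$, and the Legendrian condition from Proposition \ref{lemma:contact-groupoid} ensures the ranks match so that $\H|_M=TM\oplus\H^s$ and $A=T^s\Sigma|_M$ is the relevant complement. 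Then $\Phi$ being a Lie algebroid morphism amounts to: (i) it intertwines brackets on holonomic sections, which is exactly $[j^1u,j^1v]\mapsto[R_{u^r},R_{v^r}]|_M=R_{\{u,v\}^r}|_M=\Phi(j^1\{u,v\})$, using \eqref{eq-jet-lie-bracket} for the algebroid of $(L,\{\cdot,\cdot\})$ and the defining property of $\{\cdot,\cdot\}$; and (ii) it intertwines anchors, which reduces to checking $\rho(j^1u)=\rho_A(R_{u^r}|_M)=ds(R_{u^r})|_M$ has the right Leibniz behavior against $\{u,\cdot\}$ — this is where I expect to spend the most care, since one must verify that $ds\circ R_{u^r}$ restricted to $M$ computes the symbol of the operator $\{u,\cdot\}$, i.e. match it with the $\rho^1$ of Proposition \ref{prop:liealgjac}. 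The Leibniz identity for $[\cdot,\cdot]$ then propagates from holonomic generators by the module-structure computation.

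For part 3, under the identification $\Phi$ the Spencer operator $D$ of Proposition \ref{prop:integrability_multiplicative_distributions}, given by $D_X(\alpha)=[\tilde X,\alpha^r]|_M \bmod \H^s$, must be shown to equal the classical Spencer operator, i.e. the projection $\Gamma(J^1L)\cong\Gamma(\hom(TM,L))\oplus\Gamma(L)\to\Omega^1(M,L)$ onto the first factor. The computation is: for $\alpha=\Phi(j^1u)=R_{u^r}|_M$, we have $\alpha^r=R_{u^r}$ (right-invariance!), so $D_X(j^1u)=[\tilde X,R_{u^r}]|_M\bmod\H^s$; but $R_{u^r}$ is a Reeb vector field, hence $[\tilde X,R_{u^r}]\bmod\H=0$ already on all of $\Sigma$ when $\tilde X\in\Gamma(\H)$ — so $D_X(j^1u)$ lands in $\H/\H^s\cong TM$ paired trivially, i.e. $D(j^1u)=0$, which is precisely the statement that holonomic sections are in the kernel of the classical Spencer operator. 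Since both $D$ and the classical Spencer operator are $C^\infty(M)$-Leibniz over the \emph{same} $l=\pr$ and agree on holonomic sections, they agree everywhere.

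The main obstacle I anticipate is step (ii) of part 2: pinning down the anchor of $A$ on the invariant Reeb vector fields and matching it, via Kirillov's symbol characterization used in Proposition \ref{prop:liealgjac}, with $\rho^1(u)=$ symbol of $\{u,\cdot\}$. This requires relating the $s$-differential $ds(R_{u^r})$ evaluated along $M$ to the infinitesimal action encoded in $\{u,\cdot\}$, and making sure the representation $\nabla$ on $L$ (Lemma \ref{lemma:representation}) is compatible with the groupoid representation from Lemma \ref{eq:right-translation} under $\Phi$ — a bookkeeping point that is conceptually clean but where sign and identification conventions must be handled carefully. Everything else reduces to the already-established module-structure coincidence (Lemma \ref{lemma-2} versus Remark \ref{Spencer-dec}) plus the characterizing property $[R_{u^r},R_{v^r}]=R_{\{u,v\}^r}$ of the Reeb bracket.
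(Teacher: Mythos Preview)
Your approach is correct and uses the same ingredients as the paper's proof (Corollary~\ref{corollary:right-invariant}, Proposition~\ref{lemma:contact-groupoid}, and the coincidence of module structures in Lemma~\ref{lemma-2} and Remark~\ref{Spencer-dec}). The only substantive difference is the order of assembly: the paper first establishes $\Phi$ as a vector bundle isomorphism, then proves item~3 directly (computing $D_X(u,\omega)=\omega(X)$ on the decomposition $\Gamma(A)\cong\Gamma(L)\oplus\Omega^1(M,L)$), and only \emph{then} invokes Proposition~\ref{cor-Jac-Spencer} to produce the Jacobi bracket $\{u,v\}:=\pr([j^1u,j^1v])$. This ordering sidesteps precisely the obstacle you flagged: once the transported Lie algebroid structure on $J^1L$ is known to make the classical Spencer operator a Spencer operator, Proposition~\ref{cor-Jac-Spencer} hands you a Jacobi structure whose associated algebroid is, by the uniqueness in Proposition~\ref{prop:liealgjac}, the one you already have --- so the anchor matches automatically and no separate verification is needed. (Minor slip: the anchor of $A=T^s\Sigma|_M$ is $dt|_A$, not $ds$.)
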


In the previous statement, a map $\phi:(N,\bar{L}) \to (M,L)$ between Jacobi manifolds is said to be {\bf Jacobi} with bundle component $F: \phi^*L\to\bar{L}$, if $F$ is a vector bundle isomorphism and $\phi:\Gamma(L)\to \Gamma(\bar{ L}), u\mapsto F\circ\phi^*u$ is a Lie algebra map. \\

Point 3 combined with the fact that Jacobi structures are encoded in Lie algebroid structures on $J^1L$ for which the classical Spencer operator is a Spencer operator (Proposition \ref{cor-Jac-Spencer}) reveal the appearance of the Jacobi structure $(L, \{\cdot, \cdot\})$.

\begin{proof}
We first show that $A$ is isomorphic to $J^1L$. Recall from Lemma \ref{lemma-1} that $\X(\Sigma)$ can be written as the direct sum 
$\X_{\mathrm{Reeb}}(\Sigma,\H)\oplus\Gamma(\H)$. When restricting this identification to right invariant vector fields, we obtain that 
$\X^{\mathrm{inv}}(\Sigma)=\X_{\mathrm{Reeb}}^{\mathrm{inv}}(\Sigma,\H)\oplus \Gamma^{\mathrm{inv}}(\H), \Gamma^{\mathrm{inv}}(\H):=\Gamma(\H)\cap \X^{\mathrm{inv}}(\Sigma)$. With this, we claim that the right translations 
induce an identification
\begin{equation}\label{decomposition2}
\Gamma(A)\simeq\Gamma(L)\oplus\Gamma(\H^s|_M)
\simeq \Gamma(L)\oplus \Omega^1(M,L).
\end{equation}
The first identification uses Corollary \ref{corollary:right-invariant} and the fact that right translation gives the identification $
\Gamma^{\mathrm{inv}}(\H)\simeq \Gamma(\H^s|_M)$. For the second identification, we use the identification $dt:(\H/\H^t)|_M\simeq TM$ (since $\H$ is multiplicative hence $t$-transversal) and note that the non-degenerate curvature map \eqref{eq:curvature} induces an isomorphism 
\begin{equation}\label{non-deg}
\H^s|_M\to\hom (TM,L),\ \ \ V\mapsto c_\H(V,\cdot).
\end{equation} 
That \eqref{non-deg} is an isomorphism is a consequence of $(\H^s)^c=\H^t$ (Proposition \ref{lemma:contact-groupoid}). 
The decomposition \eqref{decomposition2} can be shown to hold directly as follows. That the sum is direct is clear. 
Let $\al\in\Gamma(A)$ arbitrary, and consider the Spencer operator $D$ associated to $\H$ 
(as in Proposition \ref{prop:integrability_multiplicative_distributions}). Non-degeneracy of \eqref{non-deg} implies that there exists
 $V\in\Gamma(\H^s|_M)$ such that the map $D(\al):TM\to L$ coincides with $c_\H(V,-):TM\to L$. As a consequence, 
$R^r:=\al^r-V^r\in\X_{\mathrm{Reeb}}^{\mathrm{inv}}(\Sigma,\H)$: indeed, by Lemma 
\ref{lemma:invariance-lie-bracket}, for any $X\in \Gamma(\H)$,
\[\theta_g([R^r,X])=\theta_{t(g)}([R^r,\tilde X])=c_\H(V,dt(X))-D_{dt(X)}(\al)=0\]
where $\tilde X\in\Gamma(\H)$ is any $s$-projectable vector extending $u_*(dt(X))$. This implies that $\al^r\in\X_{\mathrm{Reeb}}^{\mathrm{inv}}(\Sigma,\H)+\Gamma^{\mathrm{inv}}(\H)$ hence
 $\al\in\Gamma(L)+\Gamma(\H^s|_M)$. Note that this also shows that $\al\in\Gamma(A)$ belongs to 
$\Gamma(L)$ if and only if $D(\al)=0$. Arguments similar to those in the proof of Lemma \ref{lemma-2} show that the $C^\infty(M)$-structure of 
$\Gamma(A)$ is the one given by the Spencer decomposition \eqref{eq:Spencer_decomposition} of $\Gamma(J^1L)$, which implies that $A$ is isomorphic to $J^1L$ as vector bundles. Hence, $A$ induces a Lie algebroid structure on $J^1L$. \\

As for the proof of item 3, note that $\H^s|_M$ is identified via the map \eqref{non-deg} with the linear subspace $T^*M\otimes L\subset J^1L$ 
and therefore, the quotient map $\theta|_A:A\to L$ is identified with the projection map $pr:J^1L\to L$. On the other hand, and having in mind the identification \eqref{decomposition2}, for
$(u,\omega)\in\Gamma(L)\oplus\Omega^1(M,L)\simeq \Gamma(J^1L)$, and $D$ the Spencer operator associated to $\H$, one obtains that 
\begin{eqnarray*}
\begin{split}
D_X(u,\omega)&=\theta([\tilde X,R_{u^r}+c_\H^{-1}(\omega)^r]|_M)\\
&=\theta([\tilde X,c_\H^{-1}(\omega)^r]|_M)=c_\H(X,c_\H^{-1}(\omega))=\omega(X),
\end{split}
\end{eqnarray*}
where the second equality uses the fact that $[\X_{\mathrm{Reeb}}(\Sigma,\H),\H]\subset \H$. This shows that $D$ coincides with the classical Spencer operator \eqref{classical Spencer operator}. With this and Proposition \ref{cor-Jac-Spencer}, we get
the desired Jacobi bracket:
\[\{u,v\}:=pr([j^1u,j^1v])=\theta([R_{u^r},R_{v^r}]|_M),\ \ \ u,v\in\Gamma(L) \]
where, in the second equality, $R_{u^r}\equiv u\in\Gamma(L)\subset\Gamma(A)$ corresponds to $j^1u\in\Gamma(J^1L)$ in the Spencer decomposition \eqref{eq:Spencer_decomposition}. This concludes the proof of item 2. \\

To conclude the proof of item 1, note that the map $\Gamma(L)\to \Gamma(\tilde L), r:u\mapsto u^r$ is a Lie algebra map. This is clear as 
\[\{u^r,v^r\}_\Sigma=[R_{u^r},R_{v^r}]\mod \H=\{u,v\}^r.\] 
Uniqueness follows from injectivity of the map $r:u\mapsto u^r$, because this implies that there exists a unique bracket on $\Gamma(L)$ making $r$ a Lie algebra morphism.
\end{proof}

\begin{remark}\rm For the Spencer operator $D$ associated to the contact distribution $\H$, denote by $\Gamma(A,D)\subset \Gamma(A)$ the space of sections $\alpha$ with the property that $D(\alpha)=0$. The previous proof shows that a section $\alpha$ belongs to $\Gamma(A,D)$ iff $\alpha^r$ belongs to $\X^{\mathrm{inv}}_{\mathrm{Reeb}}(\Sigma,\H)$. Moreover, $\Gamma(A,D)$ is a Lie subalgebra of $\Gamma(A)$, the map
$\theta:\Gamma(A,D)\to \Gamma(L)$ from Corollary \ref{corollary:right-invariant}
is a Lie algebra isomorphism, and $\Gamma(A)$ can be written as the direct sum
\[\Gamma(A)\simeq \Gamma(A,D)\oplus \Gamma(\H^s|_M).\]
\end{remark}

\section{From Jacobi manifolds to contact groupoids}
\label{sec:jactoctc}

Finally, we discuss the integrability of Jacobi manifolds. Again, the main result was known in the case of trivial line bundles \cite{Jacobi} but, even then, the approach was very computational and indirect (via Poissonization). We urge the reader to compare this section with \cite{Jacobi}. And here is the main result:

\begin{theoremz}\label{theorem-2}
Let $(M,L,\{\cdot,\cdot\})$ be a Jacobi manifold. If the associated Lie algebroid $J^1L$ is integrable, then the source 1-connected groupoid $\Sigma$ integrating $J^1L$ has a unique multiplicative distribution $\H\subset T\Sigma$ with the properties that 
\begin{enumerate}
\item $(\Sigma,\H)$ is a contact groupoid,
\item the Jacobi structure induced by $(\Sigma, \H)$ on $M$ (cf. Theorem \ref{theorem-1}) coincides with the original Jacobi structure. 
\end{enumerate}
\end{theoremz}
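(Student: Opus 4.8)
The plan is to obtain $\H$ from the classical Spencer operator $D$ on $J^1L$ via the integration result of Theorem \ref{thm:integrability_multiplicative_distributions}, and then to check that the resulting multiplicative distribution is of contact type and reproduces the original Jacobi bracket. First I would observe that, by Proposition \ref{cor-Jac-Spencer}, the Jacobi structure $(L, \{\cdot,\cdot\})$ makes the classical Spencer operator $D\colon \Gamma(J^1L)\rmap \Omega^1(M,L)$ into a Spencer operator on the Lie algebroid $J^1L$ relative to the canonical projection $\mathrm{pr}\colon J^1L\twoheadrightarrow L$. Since $J^1L$ is assumed integrable, let $\Sigma$ be its source $1$-connected integrating groupoid; $L$ is then a representation of $\Sigma$ (integrating the infinitesimal action $\nabla$ of Lemma \ref{lemma:representation}). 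Theorem \ref{thm:integrability_multiplicative_distributions} applied to the pair $(D, \mathrm{pr})$ produces a unique multiplicative distribution $\H\subset T\Sigma$ whose associated Spencer data (Proposition \ref{prop:integrability_multiplicative_distributions}) are exactly $(L, \mathrm{pr}, D)$. Uniqueness of $\H$ with properties (1)--(2) will then follow from the uniqueness clause of Theorem \ref{thm:integrability_multiplicative_distributions} together with part 3 of Theorem \ref{theorem-1}: any $\H$ satisfying (1)--(2) has, by Theorem \ref{theorem-1}, associated Spencer operator equal to the classical one relative to $\mathrm{pr}$, hence must be the $\H$ we constructed.

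The substantive point is to verify that this $\H$ is a \emph{contact} distribution, i.e. that its curvature $c_\H\colon \H\times\H\rmap \tilde L$ is non-degenerate; the multiplicativity is automatic. Here I would argue that it suffices to check non-degeneracy of $c_\H$ along the unit manifold $M\subset\Sigma$, since $\H^s$ is right-invariant (equation \eqref{item4}) and, by the dual description, $\tilde L = t^*L$ is right-invariant as well (Lemma \ref{eq:right-translation}); the curvature is compatible with right translations because the form $\theta$ cutting out $\H$ is multiplicative. Along $M$, decompose $\H|_M$ using the Spencer decomposition $\Gamma(J^1L)\cong \Omega^1(M,L)\oplus\Gamma(L)$: under the identification of $A=J^1L$, the subbundle $\H^s|_M$ corresponds to $T^*M\otimes L$ and $D$ is the classical Spencer operator, so \eqref{horizontal} and the formula \eqref{Spencer-operator} for $D$ let one compute $c_\H$ on $\H^s|_M$ and between $\H^s|_M$ and a complement $TM$. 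The key identity is that, for $V\in \H^s|_M\cong \hom(TM,L)$ and $X\in TM$, one gets $c_\H(V, X) = V(X)$ up to sign, which is manifestly non-degenerate as a pairing $\hom(TM,L)\times TM\rmap L$; this is precisely the computation that in the proof of Theorem \ref{theorem-1} produced $D_X(u,\omega)=\omega(X)$ read backwards. Non-degeneracy of $c_\H$ on all of $\H|_M$ then follows from a rank count (as in the proof of Proposition \ref{lemma:contact-groupoid}): $\mathrm{rk}\,\H = \mathrm{rk}\,TM + \mathrm{rk}\,\H^s = 2\dim M + (\dim M - 1)\cdot\dim M/\dim M$, arranged so that $\H^s|_M$ is a Lagrangian-type complement to $TM$ and the pairing is a hyperbolic form. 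Once $c_\H$ is non-degenerate, $(\Sigma,\H)$ is a contact groupoid, giving property (1).

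For property (2), I would invoke Theorem \ref{theorem-1}: the contact groupoid $(\Sigma,\H)$ induces a Jacobi structure $(L, \{\cdot,\cdot\}')$ on $M$ whose Lie algebroid is $J^1L$ with the classical Spencer operator as its Spencer operator (parts 2 and 3 of Theorem \ref{theorem-1}). But by construction $\H$ was chosen so that its associated Spencer operator is the classical one relative to $\mathrm{pr}$ built from the \emph{original} $\{\cdot,\cdot\}$. By the bijectivity in Proposition \ref{cor-Jac-Spencer} — a Jacobi bracket on $\Gamma(L)$ is determined by the corresponding Lie algebroid structure on $J^1L$, equivalently by $\mathrm{pr}([j^1u,j^1v])$ — the two brackets coincide. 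I expect the main obstacle to be the non-degeneracy verification in the second paragraph: one must be careful that the abstract $\H$ coming out of Theorem \ref{thm:integrability_multiplicative_distributions} really does restrict along $M$ to the subbundle of $J^1L\oplus T_M\Sigma$ that one expects, and that the curvature computation there genuinely uses \eqref{horizontal} and \eqref{vertical} and not just the Leibniz rule; the rank bookkeeping also needs the identity $\H^s=(\H^t)^c$, which a priori is part of the contact-groupoid conclusion rather than an input, so the argument must be organized to establish non-degeneracy first and deduce $\H^s=(\H^t)^c$ afterwards (as in Proposition \ref{lemma:contact-groupoid}). The rest is routine bookkeeping with the Spencer decomposition and the multiplicativity of $\theta$.
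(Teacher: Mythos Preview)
Your strategy matches the paper's exactly: integrate the classical Spencer operator via Theorem~\ref{thm:integrability_multiplicative_distributions}, verify that the resulting $\H$ is contact, and then deduce property~(2) and uniqueness from Theorem~\ref{theorem-1} and Proposition~\ref{cor-Jac-Spencer}. The treatment of (2) and of uniqueness is essentially identical to the paper's.

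The one place where your execution diverges from the paper, and is weaker, is the non-degeneracy check. You propose to reduce to points of $M$ by ``right-invariance of the curvature''. As stated this is not quite right: right translations act on $\H^s$ (equation~\eqref{item4}) and on $\tilde L$, but not on all of $\H$, so you cannot simply transport the full bilinear form $c_\H$ along the $s$-fibres. The paper avoids this by working directly at an arbitrary $g\in\Sigma$: using Lemma~\ref{lemma:invariance-lie-bracket} it computes, for $\alpha\in\Gamma(\H^s|_M)=\Gamma(\hom(TM,L))$ and $X\in\Gamma(\H)$,
\[
c_\H(\alpha^r,X)_g \;=\; D_{dt(X)}(\alpha)(t(g)) \;=\; \alpha\bigl(dt(X_g)\bigr),
\]
which is the evaluation pairing $\hom(TM,L)\times TM\to L$ and hence nondegenerate in $\alpha$. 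This is exactly your ``$c_\H(V,X)=V(X)$'' identity, but obtained at every $g$, not just along $M$; your version can be made to work too (e.g.\ via translation by local bisections, which do preserve all of $\H$), but you should say so. Note also that this computation, by itself, only shows that no nonzero element of $\H^s_g$ lies in the radical of $c_\H$ and that the radical sits inside $\H^t_g$; to conclude that the radical is zero at a general $g$ one uses the symmetry under groupoid inversion (which swaps $\H^s$ and $\H^t$), a step the paper leaves implicit and you do not mention. Finally, your rank formula is garbled: the correct numbers are $\mathrm{rk}\,J^1L=\dim M+1$, hence $\dim\Sigma=2\dim M+1$, $\mathrm{rk}\,\H=2\dim M$, and $\mathrm{rk}\,\H^s|_M=\mathrm{rk}\,\hom(TM,L)=\dim M$, so $\H|_M=TM\oplus\H^s|_M$ with both summands isotropic and the cross-pairing perfect --- which is the hyperbolic form you had in mind. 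Your worry about circularity with $\H^s=(\H^t)^c$ is well placed; that identity is a \emph{consequence} of nondegeneracy (Proposition~\ref{lemma:contact-groupoid}), not an input, and the argument above does not use it.
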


Combining Theorems \ref{theorem-1} and \ref{theorem-2}, one concludes that Jacobi structures on a manifold $M$ whose associated Lie algebroid $J^1L$ is integrable are in 1-1 correspondence with contact groupoids with source 1-connected fibers.



\begin{proof}[Proof of Theorem \ref{theorem-2}]
Let $\Sigma$ be the s-simply connected Lie groupoid integrating $J^1L$.
Using Theorem \ref{thm:integrability_multiplicative_distributions}, for proving that the Lie groupoid integrating $J^1L$ is contact, 
it suffices to show that the multiplicative distribution $\H\subset T\Sigma$, whose corresponding Spencer operator $D$ is the classical Spencer operator 
\eqref{classical Spencer operator}, is contact. That $\H$ is of codimension 1 is clear as it is transversal to the s-fibers (equation \eqref{item3}) 
and $L=T\Sigma^s/\H^s|_M$ is one dimensional. To prove that $\H$ is maximally non-integrable, note that as the map $l:J^1L\to L$ from Proposition
 \ref{prop:integrability_multiplicative_distributions} is the projection map, then
\[\H^s|_M=\ker (pr:J^1L\to L)=\hom(TM,L).\]  
With this, if $\al^r\in\Gamma^{\mathrm{inv}}(\H)$, by Lemma \ref{lemma:invariance-lie-bracket}
\[[\al^r,X]_g\mod \H=[\al^r,X]_{t(g)}\mod \H=D_{dt(X)}(\al)(t(g)),\]
for $X\in\Gamma(\H)$ any s-projectable vector field extending $u_*(dt(X))$, and $g\in\Sigma.$ Because $D$ is just the projection of $\Gamma(J^1L)$ 
to $\Omega^1(M,L)$ on the Spencer decomposition \eqref{eq:Spencer_decomposition}, and $ds,dt:\H\to TM$ are fiber-wise surjective (equation \eqref{item3}),
 then for $g\in\Sigma$ on which $\al^r(g)\neq 0$ (hence $0\neq\al:T_{t(g)}M\to L_{t(g)}$), one can always find $X$ so that  
\[[\al^r,X]_g\mod \H=D_{dt(X)}(\al)(t(g))=\al(dt(X_g))\neq 0.\]
This proves that $(\Sigma,\H)$ is a contact groupoid. \\

To show the second part of the theorem, denote by $\{\{\cdot,\cdot\}\}$ the Jacobi bracket induced by the contact 
groupoid $(\Sigma,\H)$. By the proof of Theorem \ref{theorem-1}, one has that $$\{\{u,v\}\}=pr([j^1u,j^1v])$$
for any $u, v\in\Gamma(L)$. On the other hand, formula \eqref{horizontal} for the representation $\nabla$ in terms of the Spencer operator $D$ says that 
$\nabla_{j^1u}(v)=pr([j^1u,j^1v]),$
and Lemma \ref{lemma:representation} writes it as $\nabla_{j^1u}(v)= \{u, v\}$. Therefore $\{\cdot,\cdot\}=\{\{\cdot,\cdot\}\}$.

To conclude the proof of the Theorem, it remains to show the uniqueness of $\H$. But this is immediate by item 3  \ref{theorem-1} and Theorem \ref{thm:integrability_multiplicative_distributions}, as the Spencer operator associated to such an $\H$ must be the classical Spencer operator. 
\end{proof}

\bibliographystyle{abbrv}
\bibliography{contact_isotropic_realisations/bibliography_isotropic_realisations}
\end{document}